\numberwithin{equation}{section}
\newtheorem{definition}{Definition}[section]
\newtheorem{lemma}[definition]{Lemma}
\newtheorem{theorem}[definition]{Theorem}
\newtheorem{corollary}[definition]{Corollary}
\newtheorem{remark}[definition]{Remark}
\renewcommand{\sb}[2]{#1(#2)}
\newcommand{\sinf}[1]{\left(\sb{#1}{n}\right)_{n\in\mathbb{N}}}
\newcommand{\E}{\mathcal{E}}
\newcommand{\F}{\mathcal{F}}
\newcommand{\G}{\mathcal{G}}
\newcommand{\underrel}[2]{\mathrel{\mathop{#2}\limits_{#1}}}
\newcommand{\Dz}{$\mathrm{D}_0$}
\newcommand{\D}{$\mathrm{D}$}
\newcommand{\T}{\mathcal{T}}
\newcommand{\U}{\mathcal{U}}
\newcommand{\flg}[1]{\lfloor\gamma #1\rfloor}
\newcommand{\flgg}[1]{\lfloor\gamma^2 #1\rfloor}
\newcommand{\fr}[1]{\{#1\}}
\begin{document}
\parindent 0pt

\title{A diluted version of the problem of the existence of the Hofstadter sequence}

\author{
\name{Jonathan H.B.\ Deane\textsuperscript{a}\thanks{Contact: J.H.B.\ Deane, e-mail: j.deane@surrey.ac.uk}
and Guido Gentile\textsuperscript{b}}
\affil{\textsuperscript{a} Department of Mathematics, University of Surrey, Guildford, GU2 7XH, UK;
\textsuperscript{b} Dipartimento di Matematica, Universit\`a Roma Tre, Roma, I-00146, Italy} }
\date{}

\maketitle

\begin{abstract}
We investigate the conditions on an integer sequence
$f(n)$, $n\in\mathbb N$, with $f(1) = 0$, such that the sequence $q(n)$,
computed recursively via $q(n) = q(n-q(n-1)) + f(n)$, with $q(1) = 1$, exists.
We prove that $f(n+1) - f(n)\in\{0, 1\}$, $n\geq 1$, is a sufficient but not
necessary condition for the existence of sequence $q$.

Sequences $q$ defined in this way typically display non-trivial
dynamics: in particular, they are generally aperiodic with no
obvious patterns. We discuss and illustrate this behaviour with some
examples.
\end{abstract}

\begin{keywords}
Nested recursion, Hofstadter $q$-sequence, existence of solutions, nonlinear dynamics
\end{keywords}
\begin{amscode}
11B37, 11B39
\end{amscode}

\section{Motivation}
In his 1979 book~\cite{geb}, author D.R.\ Hofstadter mentions an integer sequence, $q_h(n)$, defined,
for $n>2$, by
\begin{equation}
\label{hof}
q_h(n) = q_h(n-q_h(n-1)) + q_h(n-q_h(n-2))\qquad\text{ with }\qquad q_h(1) = q_h(2) = 1.
\end{equation}
In order for $q_h(n)$ to exist for all $n\in\mathbb N$, it must be true,
again for all $n\in\mathbb N$, that $1\leq q_h(n) \leq n$, since only then does the right-hand side
of~\eqref{hof} refer to terms that are already known: $q_h(n)$ is undefined for
$n\leq 0$. Should it happen that for some $n$, $q_h(n)>n$, the sequence is
said to die at $n$. Although direct computation shows that $q_h(n)$ exists for $1\leq
n\leq 3\times 10^{10}$~\cite{oeis_a}, the existence of $q_h(n)$ for all
positive $n$ is still an open question.

The sequence $q_h(n)$ is non-monotonic, aperiodic and its dynamical behaviour
appears to be complex.
In~\cite{pinn} for instance, small scale `chaotic' behaviour is described, with some order apparent at larger 
scales, and several statistical properties of the sequence are also investigated.

This complex behaviour has inspired several authors to study variations of Hofstadter's original
recursion. For instance, Tanny~\cite{tanny92} considers
$$T(n) = T(n-1-T(n-1)) + T(n-2-T(n-2)) \mbox{ with } T(0) = T(1) = T(2) = 1,$$
for $n>2$~\cite{oeis_b}. Another variant is considered in~\cite{balamohan}, in which the
authors investigate
$$V(n) = V(n - V(n - 1)) + V(n - V(n - 4)) \mbox{ with } V(1) = V(2) = V(3) = V(4) = 1$$
for $n>4$~\cite{oeis_c}.  Both these variants give rise to sequences that are monotonic and hit every 
positive integer, neither of which is true for $q_h(n)$.

A different approach is followed in~\cite{fox}, in which the recursion formula
in~\eqref{hof} is retained but different initial conditions are used. For certain initial conditions,
`eventually quasi-polynomial' solutions can be found. For a given
integer $m>0$ and a given fixed set of polynomials $p_i(n)$, $i = 0,\ldots, m-1$,
a quasi-polynomial function of $n$, $h(n)$, say, is defined by 
$h(n) = p_i(n)$, where $i = n\bmod m$. If this holds only for $n$ greater
than some positive integer $n_0$ say,
then $h(n)$ is said to be eventually quasi-polynomial. In~\cite{fox}, families of 
eventually quasi-polynomial solutions to 
\begin{equation}
\label{qpex}
r(n) = r(n-r(n-1)) + r(n - r(n-2)),
\end{equation}
with suitable initial conditions, are constructed.  As an
example of such a solution with $m=5$ and where the $p_i(n)$ are of degree at most 1, we give

\begin{center}
\parbox{0pt}{\begin{tabbing}
$p(i)\qquad\qquad$  \= $n-4\qquad$  \= $n-4\qquad$  \= $n-4\qquad$   \= $n-4\qquad$  \= $n-4\qquad$\kill
$i$     \> 0    \> 1      \> 2     \> 3      \> 4\\
$p_i(n)$  \> $2$  \> $n-4$  \> $5$   \> $n-5$  \> $n-6$,
\end{tabbing}}
\end{center}

along with the initial conditions $r(3),\ldots, r(12) = 1, 1, 3, 5, 1, 4, 7, 6, 4, 9$.
(Values of $r(1)$ and $r(2)$ are not needed to compute $r(n), n > 2$.)
For all $n> n_0 = 12$ here, $r(n)$ satisfying~\eqref{qpex} is then given by the quasi-polynomial above.

In this paper, we also consider a variation on the original problem, one
which consists of an infinite number of variants rather than just a single case. In
particular, we discuss a `diluted' version of the problem of the
existence of $q_h(n)$. In order to describe this version, we first
need the notation $\sinf{a} := a(1), a(2), \ldots$ for an
infinite sequence. We use this notation in formal contexts, such as
in definitions and lemmas, but where it it is clear what is
intended, we just write $a$ to mean the whole sequence. Naturally, when the
$n$-th term is meant, we write $a(n)$.  We also need the definitions
\begin{definition}[Property D]
If integer sequence $\sinf{a}$ obeys $a(n+1)-a(n)\in\{0, 1\}$ for $n\in\mathbb N$, 
then the sequence is said to have property \D.
\end{definition}
\begin{definition}[Property \Dz]
If integer sequence $\sinf{a}$ has property \D\ and additionally $a(1) = 0$,
then it is said to have property \Dz.
\end{definition}
Sequences that have property D or \Dz\ are often described as `slow' sequences.

The problem that we consider is the question of the existence of sequences
$q$ defined recursively by
\begin{equation}
\label{rec}
\sb{q}{n} = \sb{q}{n - \sb{q}{n-1}} + \sb{f}{n}\qquad\mbox{with}\qquad q(1) = 1,
\end{equation}
for $n\in\mathbb N$, $n>1$, and where $\sb{f}{n}$ has property \Dz\ (so in
particular $f(1) = 0$).
All sequences in the paper start from index 1 and we apply the conditions $q(1)=1$ and 
$f(1) = 0$ strictly throughout, even though $f(1)$ is never needed to compute the sequence $q$.
Computations suggest that, under these conditions, the sequence $q$ corresponding
to any $f$ with property \Dz\ exists for all $n\in\mathbb N$.

We think of sequences generated by~\eqref{rec} as `diluted' Hofstadter
sequences because of the similarity of their definition to that of the original $q_h$-sequence.
In particular, in~\eqref{rec}, one `difficult' --- nested --- term is retained, $q(n-q(n-1))$, but the second
nested term is replaced by $f(n)$, which is under our control.

Clearly the existence of the sequence $q$ is also
equivalent to the inequality $1 \leq q(n) \leq n$ holding for all $n\in\mathbb N$.
If this is the case, then we simply say that $q$ exists.
The lower bound on $q(n)$ here is easy: by~\eqref{rec}, for any $n$ for
which $q$ exists, $q(n)$ is the sum of a positive integer and a non-negative
one, and so is also greater than or equal to unity.

The upper bound on $q$, however, is less obvious.
In what follows, after some preliminaries, we give a proof of the following theorem:
\begin{theorem}
\label{main}
For all sequences $\sinf{f}$ having property \Dz, the corresponding sequence
$\sinf{q}$ with $\sb{q}{1} = 1$ and with $\sb{q}{n}$, for $n>1$, computed
from~\eqref{rec}, exists for all $n\in\mathbb N$.
\end{theorem}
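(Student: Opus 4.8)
The plan is to prove the equivalent statement $1 \le q(n) \le n$ for all $n$ by strong induction on $n$. The lower bound $q(n)\ge 1$ is immediate from~\eqref{rec}, as already observed, so the real work is the upper bound. One might first hope that $q$ itself has property \D, since $q(1)=1$ together with $q(k+1)-q(k)\in\{0,1\}$ would force $q(n)\le n$ for free; but this is false in general (for example, with $f(n)=\lfloor (n-1)/2\rfloor$ one finds $q(4)=2$ and $q(5)=4$), so a different mechanism is needed.

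The mechanism I would use rests on an auxiliary lower bound that is a free consequence of the recursion: whenever $q(k)$ is defined, the nested term $q(k-q(k-1))$ is at least $1$, so~\eqref{rec} gives
\begin{equation*}
q(k) \ge 1 + f(k),
\end{equation*}
and this also holds at $k=1$ since $q(1)=1=1+f(1)$. Carrying this alongside $1\le q(k)\le k$ in the inductive hypothesis is what makes the upper bound go through.

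For the inductive step, assume the bounds hold for all $k<n$. First I would check that $q(n)$ is well-defined: writing $m:=n-q(n-1)$, the hypothesis $q(n-1)\le n-1$ gives $m\ge 1$ while $q(n-1)\ge 1$ gives $m\le n-1$, so $q(m)$ is already known and $q(n)=q(m)+f(n)$ with $1\le q(m)\le m$. The upper bound then reduces to showing $f(n)\le q(n-1)$, and this is exactly where the auxiliary bound enters: applying it at index $n-1$ and combining with the step bound $f(n)\le f(n-1)+1$ from property \D, I obtain
\begin{equation*}
f(n) \le f(n-1)+1 \le q(n-1).
\end{equation*}
Hence $q(n)=q(m)+f(n)\le m+f(n)=n-q(n-1)+f(n)\le n$, which closes the induction (the base case $n=1$ being immediate).

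I expect the write-up to be short; the main obstacle is conceptual rather than computational, namely recognising that proving $q(n)\le n$ in isolation stalls for lack of any comparison between $f(n)$ and $q(n-1)$, and that the correct fix is the auxiliary inequality $q(n)\ge 1+f(n)$. It is precisely property \Dz---through the single step bound $f(n)-f(n-1)\le 1$ propagated one index---that converts this lower bound into the upper-bound estimate, which is also a good indication of why the differencing condition on $f$ is the natural hypothesis.
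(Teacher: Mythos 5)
Your proof is correct, and every step checks out: the auxiliary bound $q(k)\ge 1+f(k)$ follows from the recursion since the nested term is at least $1$, it holds at $k=1$ because $f(1)=0$, and the chain $q(n)=q(m)+f(n)\le m+f(n)=n-q(n-1)+f(n)\le n$ closes once $f(n)\le f(n-1)+1\le q(n-1)$ is in hand. The paper reaches the same conclusion by a bulkier decomposition: it introduces the triangular collections of sets $\T$ and $\U$, with $T_{i,n}$ the set of values $q(n)$ can attain given $f(n)=i$ and $U_{i,n}=\{i+1:n\}$ (and $U_{0,n}=\{1\}$), and proves by strong induction in lemma~\ref{TeqU} that $T_{i,n}\subset U_{i,n}$, whence corollary~\ref{bound} gives $1\le q(n)\le n$. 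Underneath, the engine is identical to yours: the containment $T_{i-1,k}\cup T_{i,k}\subset\{i:k\}$ used in the paper's inductive step is exactly your pair of bounds $f(k+1)\le q(k)\le k$, the first of which is the statement $\min U_{j,k}=j+1$, i.e.\ your auxiliary inequality $q(k)\ge 1+f(k)$ combined with the single-step bound from property \D. What you do differently is strip the argument to the two scalar inequalities actually needed and carry both through one induction, which dispenses with the shift lemma~\ref{shift}, lemma~\ref{simp_q} and the explicit construction of $\U$; what the paper's packaging buys is the sharper conditional statement that $q(n)\in\{i+1:n\}$ whenever $f(n)=i$, which it then compares against the exact sets $T_{i,n}$ in figure~\ref{Tfig}. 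Your closing remark is also on target: the reason the hypothesis $f(n+1)-f(n)\in\{0,1\}$ is the natural one is precisely that it converts the free lower bound $q(n-1)\ge 1+f(n-1)$ into the needed comparison $f(n)\le q(n-1)$.
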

The proof of this theorem is the main result of the paper, but the
recursion~\eqref{rec} can also be viewed as a non-linear (in the light of the
nested term), non-autonomous (because of $f(n)$) dynamical system, discrete in both time and space.
Hence, having proved the existence of $q$ for all $f$ with property \Dz, we make some observations 
about the dynamics of sequences $q$, which are also interesting.

\section{Preliminaries}

The first term of all sequences in this paper has index 1.
We start by giving the following definitions:

\begin{itemize}
\item Two sequences $\sinf{a}$ and $\sinf{b}$ \textit{differ} if there
exists $n'\in\mathbb N$ such that $\sb{a}{n'}\neq \sb{b}{n'}$;
otherwise, they are equal. 
\item A sequence $\sinf{q}$ obeying~\eqref{rec} \textit{exists} if and only if $\sb{q}{n}$ is 
defined for all $n\in\mathbb N$; or, equivalently, if $1\leq q(n)\leq n$ for all $n\in\mathbb N$.
\item A sequence $Q(f) := \sinf{q}$ is \textit{the sequence corresponding to sequence}
$\sinf{f}$ if $\sinf{q}$ exists and is defined by~\eqref{rec}.
\end{itemize}
Where an explicit expression for $\sb{f}{n}$ is available, for example $\sb{f}{n} = \lfloor n/2\rfloor$,
we may write $Q(\lfloor n/2\rfloor)$.

Where we need to refer explicitly to the first few terms of a sequence, $a(n)$ say, we list the values
starting from $a(1)$ --- for example $a = (1, 2, 4, 4)$ means $a(1) = 1$, $a(2) = 2$, $a(3) = a(4) = 4$.

Let us define $\F$ as the set of all sequences having property \Dz: that is
$$\F:=\left\{\sinf{f}: f(1) = 0, \sb{f}{i+1}-\sb{f}{i}\in\{0,1\}\mbox{ for }i\in\mathbb N\right\}.$$
We will also need the set of finite sequences with property \Dz:
$$\F_m:=\{\left(f(n)\right)_{n=1,\ldots,m}: f(1) = 0, \sb{f}{i+1}-\sb{f}{i}\in\{0,1\}\mbox{ for }i=1,\ldots,m-1\}.$$

It is easily established that the cardinality of $\F$ is the same
as that of the real numbers. To do so, let $f\in\F$ and consider the sequence of differences
$\left(\sb{f}{i+1}-\sb{f}{i}\right)_{i\in\mathbb N}$, which will be a sequence consisting
of the symbols 0 and 1. Interpreting this as a binary decimal, we see that each distinct
$f\in\F$ gives rise to a distinct real number $x\in[0,1]$.

One important subset of $\F$, with same cardinality, is
$$\G := \left\{\sinf{f} = \lfloor\alpha n\rfloor \mbox{ with } \alpha\in[0,1)\right\}.$$
It is easy to see (i) that $\G\subset\F$, and that in
fact (ii) $\G\subsetneq\F$. Property (i) comes directly from the
definition $\lfloor x\rfloor := \max\left\{k\in\mathbb Z : k\leq x\right\}$,
from which we deduce that, for $\alpha\in[0,1]$,
$\lfloor\alpha(n+1)\rfloor - \lfloor\alpha n\rfloor$ is either 0 or 1.
For property (ii), consider the sequence $\sb{a}{1},\ldots, \sb{a}{4} = 0, 0, 1, 2$,
which are the first four terms of a sequence in $\F$.
Now assume that these terms are generated by $\sb{a}{n} = \lfloor \alpha n\rfloor$ for some $\alpha$.
Then $\sb{a}{2} = 0\implies \alpha\in[0,1/2)$, but $\sb{a}{4} = 2\implies \alpha\in[1/2,3/4)$,
leading to a contradiction. Hence, there are sequences in
$\F$ that are not in $\G$.

Furthermore, the condition that successive terms in a sequence $f(n)$ differ by at
most unity turns out to be sufficient but not necessary for the corresponding
$\sinf{q}$ to exist. There are examples where $f$ increases by more than
unity, and where it is not monotonic, but for which $Q(f)$ nonetheless exists. Consider, for instance, 
(a) $f_a = (0,0,2,2,4,4,\ldots)$, for which $q_a = Q(f_a) = (1, 1, 3, 3, 5, 5,\ldots)$ and
(b) $f_b = (0,1,0,1,0,1,\ldots)$, for which $q_b = Q(f_b) = (1, 2, 1, 2, 1, 2,\ldots)$.
Statements (a) and (b) are proved in
\begin{lemma}
\label{nonFq}
If
$$f_a(n) = 2\left\lfloor{\frac{n-1}{2}}\right\rfloor = n - \frac{3 + (-1)^n}{2},
\;\;\mbox{ then  }\;\; q_a(n) = n - \frac{1+(-1)^n}{2}$$
and if
$$f_b(n) = \frac{1+(-1)^n}{2},\;\;\mbox{ then  }\;\; q_b(n) = \frac{3+(-1)^n}{2}$$
for all $n\in\mathbb N.$ 
\end{lemma}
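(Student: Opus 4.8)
The plan is to verify both parts of the lemma by induction, checking that each proposed closed-form expression is consistent with the recursion~\eqref{rec}. For each case, I would first confirm that the stated formula for $f$ indeed agrees with the two alternating values claimed in the informal statement, and that the proposed $q$ satisfies the base case $q(1) = 1$. Then I would substitute the closed forms into~\eqref{rec} and check that the identity holds for all $n$.

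For case (a), I would first note that $q_a(n) = n - (1+(-1)^n)/2$ gives $q_a(n) = n-1$ for $n$ even and $q_a(n) = n$ for $n$ odd; in particular $q_a(1) = 1$, as required. The key step is evaluating the nested term $q_a(n - q_a(n-1))$. The natural approach is to split into the cases $n$ even and $n$ odd: compute $q_a(n-1)$ from the formula, then $n - q_a(n-1)$, then apply the formula for $q_a$ once more at that index, and finally add $f_a(n)$. I expect each parity case to reduce to checking a short arithmetic identity in $n$, and that the two cases together reproduce $q_a(n)$. For case (b), the same strategy applies: here $q_b(n-1)$ alternates between the values $1$ and $2$, so the argument $n - q_b(n-1)$ shifts by a small, parity-dependent amount, and $f_b(n)$ likewise alternates; again I expect each parity case to collapse to a routine identity.

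The main obstacle, such as it is, will be bookkeeping rather than conceptual difficulty: one must track the parity of $n-1$ versus $n$ correctly when evaluating the nested term, since the index $n - q_a(n-1)$ (respectively $n - q_b(n-1)$) has a parity that depends on $n$, and feeding it back into the closed form requires care to avoid an off-by-one error. A clean way to avoid confusion is to tabulate, for each parity of $n$, the four quantities $q(n-1)$, $n-q(n-1)$, $q(n-q(n-1))$, and $f(n)$, and then confirm that their combination matches $q(n)$. Since the recursion at index $n$ only refers to earlier indices (the existence of $q_a$ and $q_b$ being guaranteed by the fact that $1 \leq q(n) \leq n$ holds throughout for these explicit formulas), a straightforward induction on $n$ completes the argument.
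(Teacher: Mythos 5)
Your proposal is correct and follows essentially the same route as the paper: a direct parity-by-parity computation of $n-q(n-1)$ and the nested term, which in both cases collapses to $q(n-q(n-1))=1$, so the recursion reduces to $q(n)-1=f(n)$. The paper's proof is just the compressed version of the tabulation you describe.
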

\begin{proof}
These just boil down to computation.

For $q_a$, we have that $n-q_a(n-1) = 1$ ($n$ even), $=2$ ($n$ odd), so $q_a(n-q_a(n-1)) = 1$
for all $n$. Hence, $q_a(n) - q_a(n-q_a(n-1)) = q_a(n)-1 = n - \frac{3 + (-1)^n}{2} = f_a(n)$,
as claimed.

For $f_b$, we have $n-q_b(n-1) = n-1$ ($n$ even), $n-2$ ($n$ odd). Hence,
$n-q_b(n-1)$ is odd, and so $q_b(n-q_b(n-1)) = 1$, both for all $n$.
Therefore, $q_b(n) - q_b(n-q_b(n-1)) = q_b(n)-1 =\frac{1+(-1)^n}{2} = f_b(n)$.
\end{proof}
\begin{remark}
Example (b) is in fact a special case of 
$$\mbox{If $m\in\mathbb N$ and }  f(n) = (n-1)\bmod m,\mbox{ then } q(n) = f(n)+1
= ((n-1)\bmod m) + 1.$$
\end{remark}
On the other hand, it is easy to find examples of $f\notin\F$ for which $Q(f)$
does not exist --- for instance, $f= (0, 2, 2)$,
for which $q(1) = 1, q(2) = 3$ and so $q(3) = q(0)+f(3)$ which is undefined.

We also prove
\begin{lemma}
\label{differ}
If $f$ and $f'$ are different sequences,
and $Q(f)$ and $Q(f')$ both exist, then $Q(f)\neq Q(f')$.
\end{lemma}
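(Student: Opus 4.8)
The plan is to compare the two sequences $q := Q(f)$ and $q' := Q(f')$ index by index, exploiting the fact that the recursion~\eqref{rec} determines $q(n)$ from earlier $q$-values together with the single value $f(n)$. Since $f$ and $f'$ differ, there is a least index $n'$ with $f(n')\neq f'(n')$; because $f(1)=f'(1)=0$ by property \Dz, necessarily $n'\geq 2$. The whole argument is then a clean induction: I show that $q$ and $q'$ agree on every index up to $n'-1$, and that they are forced to disagree at $n'$.

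First I would prove, by strong induction on $n$, that $q(n)=q'(n)$ for all $n<n'$. The base case is immediate, as $q(1)=q'(1)=1$. For the inductive step, suppose $q(m)=q'(m)$ for all $m$ with $1\leq m<n$, where $n<n'$. By~\eqref{rec}, $q(n)=q(n-q(n-1))+f(n)$ and $q'(n)=q'(n-q'(n-1))+f'(n)$. The inductive hypothesis gives $q(n-1)=q'(n-1)$, so the two nested arguments coincide; moreover, since both sequences exist we have $1\leq q(n-1)\leq n-1$, so this common argument lies in $\{1,\ldots,n-1\}$, where the inductive hypothesis applies again and yields $q(n-q(n-1))=q'(n-q'(n-1))$. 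Finally $f(n)=f'(n)$ because $n<n'$, whence $q(n)=q'(n)$.

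Now I would evaluate both recursions at $n=n'$. Exactly as in the inductive step, $q(n'-1)=q'(n'-1)$ forces the nested arguments to agree, and this argument lies in $\{1,\ldots,n'-1\}$, so $q(n'-q(n'-1))=q'(n'-q'(n'-1))$. Subtracting the two instances of~\eqref{rec} then gives $q(n')-q'(n')=f(n')-f'(n')$, which is nonzero by the choice of $n'$. Hence $q$ and $q'$ differ at index $n'$, that is, $Q(f)\neq Q(f')$. There is no real obstacle here; the only point requiring care is checking that the nested argument $n'-q(n'-1)$ is genuinely an earlier index, so that the inductive hypothesis can be invoked on it. This is guaranteed precisely by the bounds $1\leq q(n'-1)\leq n'-1$, which hold because $Q(f)$ and $Q(f')$ are assumed to exist.
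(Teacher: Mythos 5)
Your proposal is correct and follows essentially the same route as the paper: identify the least index $n'$ where $f$ and $f'$ differ, observe that $q$ and $q'$ agree on all earlier indices so the nested arguments coincide, and conclude that $q(n')-q'(n')=f(n')-f'(n')\neq 0$. The only difference is that you spell out the strong induction justifying $q(n)=q'(n)$ for $n<n'$, which the paper simply asserts.
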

\begin{proof}
Let $f(n) = f'(n)$ for $1\leq n\leq k$ but $f(k+1)\neq f'(k+1)$.
Write $q = Q(f)$, $q' = Q(f')$. Now, $q(n) = q'(n)$ for $1\leq n\leq k$
and $q(k+1) = q(k+1 - q(k)) + f(k+1)$, $q'(k+1) = q'(k+1 - q'(k)) + f'(k+1)$,
Also, $k+1 - q(k) = k+1 - q'(k)$, but $f(k+1)\neq f'(k+1)$.
Hence $q(k+1)\neq q'(k+1)$ and so the sequences $q$, $q'$ differ.
\end{proof}
We remark here that the recursion~\eqref{rec} can be used in either
direction. That is, given any sequence $f$ for which $q = Q(f)$ exists,
sequence $q$ can obviously be computed, term-by-term, in order of increasing index,
from~\eqref{rec}; and, given any sequence $q$ obeying $1\leq q(n)\leq n$
for all $n$,~\eqref{rec} can be used to compute $f(n)$ for all $n$. A case where computing
$f$ given $q$ gives an interesting result is
\begin{lemma}
\label{qhasD}
Let sequence $q(n)$ obey (i) $1\leq q(n)\leq n$ and (ii) $q(n+1) - q(n)\in\{0, 1\}$,
both for $n\in\mathbb N$, and let $q(1) = 1$. Define $f(n) = q(n) -
q(n-q(n-1))$ for $n\geq 2$ with $f(1) = 0$. Then, for $n\geq 1$, 
$f(n+1) - f(n)\in\{-1, 0, 1\}$.
\end{lemma}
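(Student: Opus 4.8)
The plan is to expand $f(n+1)-f(n)$ directly from the definition and to exhibit it as the difference of two quantities, each of which lies in $\{0,1\}$; the difference of two members of $\{0,1\}$ automatically lies in $\{-1,0,1\}$. Writing $f(n+1) = q(n+1) - q(n+1-q(n))$ and $f(n) = q(n) - q(n-q(n-1))$, I would group the terms as
\begin{equation*}
f(n+1) - f(n) = \bigl[q(n+1) - q(n)\bigr] - \bigl[q(n+1-q(n)) - q(n-q(n-1))\bigr].
\end{equation*}
The first bracket lies in $\{0,1\}$ immediately by hypothesis (ii), so everything hinges on controlling the second bracket, namely the difference of the two nested terms.

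The key observation is that the two arguments $n+1-q(n)$ and $n-q(n-1)$ differ by exactly $1-(q(n)-q(n-1))$, and by hypothesis (ii) the quantity $q(n)-q(n-1)$ is either $0$ or $1$ (for $n\geq 2$). I would therefore split into two cases. If $q(n)=q(n-1)$, then the two arguments are consecutive integers $m+1$ and $m$ with $m=n-q(n-1)$, so the second bracket equals $q(m+1)-q(m)$, which again lies in $\{0,1\}$ by (ii). If instead $q(n)=q(n-1)+1$, then the two arguments coincide and the second bracket is exactly $0$. In either case the second bracket lies in $\{0,1\}$, and hence $f(n+1)-f(n)$ is the difference of two members of $\{0,1\}$, which is the claim.

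Before running this argument I would peel off the base case $n=1$ separately, since the general step invokes $q(n-1)$ and hence would require the undefined value $q(0)$; here one simply computes $f(2)-f(1) = f(2) = q(2)-q(1) \in \{0,1\}$ using (ii). I would also record the routine index checks that keep every nested term inside the range where $q$ is defined: hypothesis (i) gives $1 \leq n-q(n-1) \leq n-1$ and $1 \leq n+1-q(n) \leq n$, and the auxiliary index $m=n-q(n-1)$ satisfies $m\geq 1$, so that property (ii) may legitimately be applied to $q(m+1)-q(m)$.

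There is no substantial obstacle here: the proof is a one-line expansion followed by a two-case split on the value of $q(n)-q(n-1)$. The only points that require a little care are recognizing that the gap between the two nested arguments is governed precisely by $q(n)-q(n-1)$ — so that property \D\ for $q$ forces the nested difference into $\{0,1\}$ — and remembering to treat $n=1$ on its own because $q(0)$ is undefined.
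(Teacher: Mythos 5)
Your proposal is correct and takes essentially the same route as the paper's proof: the identical decomposition of $f(n+1)-f(n)$ into $q(n+1)-q(n)$ minus the difference of the two nested terms, together with the same key observation that the nested arguments differ by $1-(q(n)-q(n-1))\in\{0,1\}$, so that property \D\ for $q$ forces that difference into $\{0,1\}$. Your explicit handling of the base case $n=1$ and the index-range checks are small points of care that the paper treats more tersely, but the argument is the same.
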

\begin{proof}
Throughout the proof, we assume integer $n\geq 2$.
Assumption (i) tells us that $q$ exists and (ii), that $q$ has property D.
By the definition of $f(n)$,
$$f(n+1) - f(n) = \underbrace{q(n+1) - q(n)}_{:=A(n)} - 
\underbrace{\left[q(n+1 - q(n)) - q(n-q(n-1))\right]}_{:= B(n)}.$$
Assumption (ii) implies immediately that $A(n)\in\{0, 1\}$.

Now define $\ell(n) := n - q(n-1)$: by (i), $1\leq\ell(n)\leq n-1$. Then
$\ell(n+1) - \ell(n) = 1 - (q(n) - q(n-1))\in\{0, 1\}$, again by (ii).
Hence, $B(n)$ is either $q(\ell(n))-q(\ell(n)) = 0$, or $q(\ell(n)+1)-q(\ell(n))\in\{0,1\}$.
Therefore, $B(n)\in\{0,1\}$. Thus we have $A(n)-B(n)\in\{-1, 0, 1\}$ and
the lemma is proven.
\end{proof}
Note that the converse of lemma~\ref{qhasD} is not true: if
$f(n+1)-f(n)\in\{-1, 0, 1\}$ then $q$ does not necessarily have property D:
a counterexample is given by $f_b$ in lemma~\ref{nonFq}.

Finally, we point out that cases in which $f\in\F$ gives rise to a monotonic sequence
$q$ appear to be rare --- a few examples, each of which also has property \D,
are given in section~\ref{q_has_D}.

\section{The main theorem}

We now give a proof of Theorem~\ref{main}. We need several lemmas and we start with
\begin{lemma}[The shift property]
Let $\sinf{f}$, with $f(1) = 0$, be any sequence of integers for which the corresponding
sequence $\sinf{q} = Q(f)$, with $q(1) = 1$, exists. Define 
$$(\mathrm{A}):\;\; \sinf{f'}:= \begin{cases} 0 & n = 1\\ f(n-1) & n > 1\end{cases}
\;\;\;\mbox{and}\;\;\;
(\mathrm{B}):\;\;\sinf{q'}:= \begin{cases} 1 & n = 1\\ q(n-1) & n > 1\end{cases}.$$
Then $(\mathrm{A})$ implies $(\mathrm{B})$ and $(\mathrm{B})$ implies $(\mathrm{A})$.
\label{shift}
\end{lemma}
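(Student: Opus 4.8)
The plan is to treat both implications as verifications that a proposed sequence satisfies the relevant recursion, exploiting the fact that~\eqref{rec} together with the initial value determines $Q(\cdot)$ uniquely, term by term. The single computational ingredient I will need is the range of the nested argument: since $q$ exists, $1\le q(n-2)\le n-2$ for $n>2$, so that $2\le n-q(n-2)\le n-1$. This is exactly what lets me rewrite a value of the shifted sequence at the nested index in terms of $q$, because the shifted formula only applies at indices $\ge 2$.

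For $(\mathrm A)\Rightarrow(\mathrm B)$, I would define $\tilde q$ by the right-hand side of $(\mathrm B)$ --- that is, $\tilde q(1)=1$ and $\tilde q(n)=q(n-1)$ for $n>1$ --- and check directly that $\tilde q$ obeys~\eqref{rec} with $f$ replaced by the shifted sequence $f'$ of $(\mathrm A)$. The cases $n=1$ and $n=2$ are immediate from the initial values and from $f(1)=0$. For $n>2$ I would compute $\tilde q(n-\tilde q(n-1))=\tilde q(n-q(n-2))$, use $2\le n-q(n-2)\le n-1$ to replace this by $q\bigl((n-1)-q(n-2)\bigr)$, and then recognise the whole right-hand side as $q\bigl((n-1)-q(n-2)\bigr)+f(n-1)=q(n-1)=\tilde q(n)$, the middle equality being precisely~\eqref{rec} for $q$ at index $n-1$. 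Since $\tilde q$ satisfies the recursion driven by $f'$ with the correct initial value, uniqueness gives $Q(f')=\tilde q$, which is statement $(\mathrm B)$ (and in passing shows $Q(f')$ exists).

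For $(\mathrm B)\Rightarrow(\mathrm A)$, I would run the same index bookkeeping backwards. Assuming $q'$ is the shift of $q$ as in $(\mathrm B)$ and that $q'=Q(f')$, I would recover $f'$ from the inverted form $f'(n)=q'(n)-q'\bigl(n-q'(n-1)\bigr)$ for $n>1$, set $f'(1)=0$ by the standing convention, substitute $q'(n)=q(n-1)$, and again use $2\le n-q(n-2)\le n-1$ to align indices; the expression then collapses to $f'(n)=q(n-1)-q\bigl((n-1)-q(n-2)\bigr)=f(n-1)$, which is $(\mathrm A)$. Alternatively, and more economically, $(\mathrm B)\Rightarrow(\mathrm A)$ follows from the first implication together with the injectivity of $Q$ proved in Lemma~\ref{differ}: if $q'$ is the shift of $q$, then by $(\mathrm A)\Rightarrow(\mathrm B)$ the shifted $f'$ already produces $q'$, so any $f'$ with $Q(f')=q'$ must coincide with the shift of $f$.

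I do not expect a genuine obstacle here. The only point requiring care is the index arithmetic at the boundary, namely ensuring the nested argument $n-q(n-2)$ never drops to $1$ or below --- so that the shifted formula $q'(m)=q(m-1)$ is legitimate --- which is guaranteed by the existence bounds on $q$, together with treating the base cases $n=1,2$ separately.
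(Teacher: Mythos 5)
Your proof is correct, and both directions rest on the same computational core as the paper's: the bound $1\le q(n-2)\le n-2$, hence $2\le n-q(n-2)\le n-1$, which is exactly the inequality ($2\le m+1\le k$) that the paper flags as the reason for using strong induction. The difference is in how you package the direction $(\mathrm{A})\Rightarrow(\mathrm{B})$. The paper takes $q'=Q(f')$ as the sequence generated by the recursion and proves $q'(n)=q(n-1)$ by strong induction, needing the induction hypothesis to evaluate $q'$ at the nested index $k+1-q'(k)$. You instead define the candidate $\tilde q$ explicitly as the shift of $q$, evaluate the nested term directly from that definition (no induction hypothesis required), verify that $\tilde q$ satisfies~\eqref{rec} driven by $f'$, and conclude $\tilde q = Q(f')$ by the term-by-term uniqueness of the recursively defined sequence. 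This buys a slightly cleaner argument --- the only induction left is the trivial one hidden in ``uniqueness'' --- and it makes explicit, as a byproduct, that $Q(f')$ exists. Your $(\mathrm{B})\Rightarrow(\mathrm{A})$ computation is the paper's verbatim; your alternative via the injectivity of $Q$ (Lemma~\ref{differ}) is also valid and is a nice observation, though it relies on the first implication having already been established.
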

\begin{proof}
{\bf A $\Rightarrow$ B.} We use strong induction and the fact that
both $q$ and $q'$ obey~\eqref{rec}.

First, $q(1) = q'(1) = 1$ by definition, and $q'(2) = q'(2-q'(1))+f'(2) = 1 + f(1) = 1$.
Hence, $q'(2) = q(1)$.

Next, assume that 
\begin{equation}
\label{assump}
q'(n) = q(n-1) \qquad\mbox{for}\qquad n = 3, \ldots, k\mbox{ with } k > 3.
\end{equation}
Now, by assumption~\eqref{assump}, we have that
$q'(k) = q(k-1)$. For the inductive step $k\mapsto k+1$, we have that
\begin{eqnarray*}
q'(k+1) - q(k) &=&  q'(k+1 - q'(k)) - q(k - q(k-1))\\
& =& q'(k+1 - q(k-1)) - q(k-q(k-1)) =  q'(m+1) - q(m),
\end{eqnarray*}
where $m = k-q(k-1)$.
Since $q$ exists, $2\leq m+1 \leq k$
\footnote{It is in the light of this inequality that \textit{strong} induction is used.}
and so, by~\eqref{assump}, $q'(m+1) = q(m)$ and hence $q'(k+1) = q(k)$. 

{\bf B $\Rightarrow$ A.} This is straightforward. By definition, we have
$f'(1) = 0$. Then, by B, we have
$q'(1) = 1$ and $q'(2) = q(1) = 1$. Hence, by~\eqref{rec}, $f'(2) = q'(2) - q'(2-q'(1)) = 0$.
Then, letting $n\geq 3$, we have
$$f'(n) = q'(n) - q'(n-q'(n-1)) \underrel{\mbox{\scriptsize{by B}}}{=} q(n-1) - q(n-1-q(n-2))
\underrel{\mbox{\scriptsize{by~\eqref{rec} }}}{=} f(n-1)$$
as claimed.
\end{proof}

We will also need the following two special cases of $f\in\F$, each of which,
unusually, leads to a sequence $q$ with property D:
\begin{lemma}
\label{simp_q}
Let $f_0(n) = 0$ and $f_1(n) = n-1$, both for all $n\in\mathbb N$, and define
sequences $q_0 = Q(f_0)$, $q_1 = Q(f_1)$. Then $q_0(n) = 1$ and $q_1(n) = n$,
also for all $n$.
\end{lemma}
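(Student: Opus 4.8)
The plan is to establish each of the two identities by an elementary induction on $n$, treating $q_0$ and $q_1$ separately since the two sequences arise from different choices of $f$. In both cases the induction will simultaneously confirm that the nested index $n-q(n-1)$ remains at least $1$, so that the recursion~\eqref{rec} is well-defined at every step; this is precisely the statement that $q_0$ and $q_1$ exist, which is what $Q(f_0)$ and $Q(f_1)$ presuppose.

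For $q_0$ I would prove the claim $q_0(n)=1$ for all $n$ by induction. The base case is $q_0(1)=1$ by definition. For the inductive step, assume $q_0(n-1)=1$ for some $n\geq 2$. Since $f_0(n)=0$, the recursion~\eqref{rec} reduces to $q_0(n)=q_0(n-q_0(n-1))$, and substituting $q_0(n-1)=1$ gives $q_0(n)=q_0(n-1)=1$, closing the induction. The nested index here is $n-q_0(n-1)=n-1\geq 1$, so each term is defined, and $1\leq q_0(n)\leq n$ holds trivially.

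For $q_1$ I would prove $q_1(n)=n$ for all $n$, again by induction, with base case $q_1(1)=1$. Assuming $q_1(n-1)=n-1$ for some $n\geq 2$, the key computation is that the nested index collapses to $n-q_1(n-1)=n-(n-1)=1$, so the nested term equals $q_1(1)=1$; adding $f_1(n)=n-1$ then yields $q_1(n)=1+(n-1)=n$. Once more the index $1$ is in range and $1\leq q_1(n)\leq n$.

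I do not expect any genuine obstacle: the only point requiring attention is verifying that $n-q(n-1)\geq 1$ so that~\eqref{rec} refers to an already-defined term, and in each case the inductive hypothesis makes this index either $n-1$ (for $q_0$) or exactly $1$ (for $q_1$). This lemma therefore plays the role of a base case anchoring the subsequent, more delicate existence arguments rather than presenting a difficulty of its own.
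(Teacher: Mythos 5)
Your proposal is correct and follows essentially the same route as the paper: a simple induction for each sequence, with the nested index reducing to $n-1$ for $q_0$ and to $1$ for $q_1$. The only (welcome) addition is your explicit remark that the nested index stays in range, which the paper leaves implicit.
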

\begin{proof}
For $q_0(n)$,~\eqref{rec} gives $q_0(n) = q_0(n-q_0(n-1))$ since $f_0(n) = 0$.
By induction, with base case $q_0(1) = 1$, we make the
assumption that for some $k>1$ we have $q_0(k) = 1$. Then, by the
recursion formula, we have $q_0(k+1) = q_0(k+1-q_0(k)) = q_0(k) = 1$, 
as claimed.

For $q_1(n)$,~\eqref{rec} gives $q_1(n) = q_1(n-q_1(n-1))+n-1$ since $f_1(n) = n-1$.
With $q_1(1) = 1$, we assume that $q_1(k) = k$ for some $k > 1$. Then
$$q_1(k+1) = q_1(k+1 - q_1(k)) + k = q_1(1)+k = k+1$$
and the proof is complete.
\end{proof}
In the remainder of this section, $f$ will always have property \Dz\ and
we will assume that $f$ is such that $Q(f)$ exists. We introduce the handy notation
$\{j:k\}$, with $k\geq j$, to designate the set of successive integers $\{j, j+1,\ldots, k\}$.

We now consider the question: for a particular $n$, given that $f(n) = i\in\{0:n-1\}$,
what are the possible values of $q(n)$? That is, we investigate
the sets $\{q(n)|_{f(n) = i}\}$, where $f$ ranges over $\F_n$.
The lower bound on each of these sets is given by
\begin{lemma}
\label{min_T}
If $Q(f)$ for $f\in\F$ exists, then
(i) for $n\in\mathbb N$ and $i= 0, \ldots,n-1$, $\min\left(\{q(n)|_{f(n) = i}\}\right) = i+1$; and
(ii) a sequence $f$ that gives rise to this minimum value is
$$\left(f(k)\right)_{k=1,\ldots n} = (\underbrace{0, \ldots, 0,}_{n-i\;\mbox{\scriptsize zeroes}}\, 1, 2, \ldots, i).$$
\end{lemma}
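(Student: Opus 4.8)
The plan is to prove the two parts separately: part (i) gives the lower bound $q(n)\ge i+1$ valid for \emph{every} admissible $f$, and part (ii) exhibits a single $f$ that attains it, so that the two together pin the minimum down exactly.

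For the lower bound, I would argue directly from the recursion~\eqref{rec} and the universal bound $q\ge 1$. For any $f\in\F_n$ with $f(n)=i$ for which $q(n)$ is defined (so that $q(n)$ is actually a member of the set $\{q(n)|_{f(n)=i}\}$), we have $q(n)=q(n-q(n-1))+f(n)=q(n-q(n-1))+i$. Since $Q(f)$ exists, $1\le q(n-1)\le n-1$, so the argument $m:=n-q(n-1)$ satisfies $1\le m\le n-1$ and $q(m)$ is a genuine term of the sequence; as already noted in the discussion of the lower bound on $q$, every such term is at least $1$. Hence $q(n)\ge 1+i=i+1$, and since this holds for each admissible $f$ it gives $\min(\{q(n)|_{f(n)=i}\})\ge i+1$.

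For part (ii) I would verify that the displayed sequence is admissible and realises the value $i+1$. First I would check that it has property \Dz: it begins with $f(1)=0$ (legitimate since $i\le n-1$ forces $n-i\ge 1$), and its successive differences are $0$ within the initial block of $n-i$ zeroes, then $1$ at the single transition from $0$ to $1$, and then $1$ repeatedly, all lying in $\{0,1\}$. Next I would compute $q$. On the zero block, $f(k)=0$ for $k\le n-i$ forces $q(k)=1$ by exactly the induction used for $q_0$ in Lemma~\ref{simp_q}. I would then prove by induction on $j$ that $q(n-i+j)=j+1$ for $j=0,1,\ldots,i$: the base case $j=0$ is $q(n-i)=1$, and the inductive step reads $q(n-i+j+1)=q\bigl(n-i+j+1-q(n-i+j)\bigr)+f(n-i+j+1)=q(n-i)+(j+1)=1+(j+1)$, the crucial point being that $n-i+j+1-q(n-i+j)=n-i+j+1-(j+1)=n-i$ always lands back at the end of the constant block, where $q=1$. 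Taking $j=i$ yields $q(n)=i+1$, which incidentally confirms $q(n)\le n$, so this $q$ is indeed an existing sequence and the example is valid.

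Combining the two parts, the minimum over all admissible $f$ is at least $i+1$ and is attained by the stated sequence, hence equals $i+1$. I do not anticipate a real obstacle: the lower bound is immediate from $q\ge 1$, and the only genuine content is the telescoping induction in part (ii), where the one thing to watch is that every recursive call refers precisely back to the index $n-i$, so that the nested argument is constant and $q$ there is $1$.
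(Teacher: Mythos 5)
Your proof is correct. Part (i) is essentially identical to the paper's argument: both write $q(n)=q(n-q(n-1))+i$, use existence to place $m=n-q(n-1)$ in $\{1:n-1\}$, and conclude from $q(m)\geq 1$ that $q(n)\geq i+1$. Part (ii) takes a genuinely different (though closely related) route. The paper handles the case $i=n-1$ via Lemma~\ref{simp_q} and otherwise invokes the shift property (Lemma~\ref{shift}) applied $n-i-1$ times: the candidate sequence is $(0,1,\ldots,i)$ with $n-i-1$ zeroes prepended, and each prepended zero shifts $q$ by one index, so $q(n)$ equals the value $i+1$ that Lemma~\ref{simp_q} gives for $Q(f_1)$ at index $i+1$. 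You instead verify the value directly by a telescoping induction, showing $q(n-i+j)=j+1$ with the nested argument always collapsing to the fixed index $n-i$ where $q=1$. Your computation is correct (including the key identity $n-i+j+1-(j+1)=n-i$) and is more self-contained, since it does not rely on the shift lemma; the paper's version is shorter given that Lemma~\ref{shift} has already been established, and it makes the structural reason for the answer --- prepending zeroes to $f$ merely delays $q$ --- more transparent. One small bonus of the paper's route is that it needs no separate admissibility check, whereas you rightly pause to confirm the displayed sequence has property \Dz\ and that the resulting $q$ satisfies $q(n)\leq n$.
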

\begin{proof}
For (i), if $n=1$ then, by the initial conditions, $f(1) = 0$
and $q(1) = 1 = f(1)+1$; and so the claimed lower bound holds. Therefore,
fix $n>1$ and recall that $q(n) = q(n-q(n-1))+f(n) = q(n-q(n-1))+i$. Then, provided that
$q$ exists, $1\leq q(k)\leq k$ for $1\leq k \leq n$. In particular $1\leq q(n-1)\leq n-1$
and so $1\leq n - q(n-1)\leq n-1$. Recall now that we are considering $f(n) = i$.
Hence, $q(n)= q(j)+i$, where $1\leq j\leq n-1$, so $1\leq q(j)\leq n-1$.
Therefore, $q(n) \geq i+1$, and this gives the required minimum.

For (ii), we use lemmas~\ref{shift} and~\ref{simp_q}. In order that $f(n) = i$,
we must have $i\leq n-1$, otherwise $f$ cannot have property \Dz. If
$f(n) = n-1$, then, by lemma~\ref{simp_q}, $q(n) = n$ can be achieved by choosing $f = (0, 1, \ldots, n-1)$.
Otherwise, $f(n) = i < n-1$ and in this case, the shift
property of lemma~\ref{shift} applied $(n-i-1)$ times gives $q(n) = i+1$.
\end{proof}
We have easily found the minimum of the sets $\{q(n)|_{f(n) = i}\}$, but the
corresponding maximum is not so straightforward. There is, however, a way round
the problem. We require the definition of two collections of 
finite sets of integers, $\T$ and $\U$, both of which have identical
structure. Taking $\T$ first, this is the collection of sets
$T_{i,n}$, where $n\in\mathbb N$ and $i = 0,\ldots, n-1$, which can 
be pictured as a triangular arrangement with $n$ indexing rows and
$i$ indexing the position within row $n$.  The definition of $\T$ is
\begin{definition}
\label{Tdef}
$\T$ is the collection of sets
$$T_{i, n} := \left\{q(n) : q = Q(f), f\in\F \mbox{ such that } f(n) = i\right\},$$
with $n\in\mathbb N$ and $0\leq i\leq n-1$.
\end{definition}
By lemma~\ref{simp_q} we have 
\begin{equation}
\label{T0n}
T_{0,n} = \{1\}\quad(\mathrm{a})\qquad\mbox{ and }\qquad T_{n-1,n} = \{n\}\quad(\mathrm{b}),
\end{equation}
for $n\in\mathbb N$.
For (a), if $f\in\F$ and $f(n) = 0$, then it must be that $f(i) = 0$ for $1\leq i < n$
and so the first part of lemma~\ref{simp_q} applies.
For (b), if $f\in\F$, the only way that $f(n) = n-1$ is if $f(i) = i-1$ for $1\leq i < n$,
so the second part of lemma~\ref{simp_q} applies.

We have not succeeded in computing $\T$ explicitly for all valid $i$ and
$n$. In principle, $\T$ can be computed by finding all the values that $q(n)$ can take given that
$f\in\F$ and $f(n) = i$. This would in turn require knowledge of all the values that $q(n-q(n-1))$ can take
(and then adding $i = f(n)$ to them). In practice, though, it seems that we have
insufficient knowledge of the sequences $q = Q(f)$ as $f$ ranges over $\F$.

However, we can explicitly construct a set, $\U$, with identical structure to $\T$,
and which will turn out to contain $\T$. This is sufficient to prove theorem~\ref{main}. 
The collection $\U$ consists of the sets
\begin{equation}
\label{Udef}
U_{i,n} := \begin{cases}
\{1\} & i = 0\\
\{i+1:n\} & i = 1, \ldots, n-1.
\end{cases}
\end{equation}
for $n\in\mathbb N$.

We now prove
\begin{lemma}
\label{TeqU}
Let collections of sets $\T$ and $\U$ be as in definition~\ref{Tdef} and
equation~\eqref{Udef} respectively. Then
$T_{i,n}\subset U_{i,n}$ 
for $n\in\mathbb N$ and $i = 0, \ldots, n-1$.
\end{lemma}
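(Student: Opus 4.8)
The plan is to prove the inclusion by strong induction on $n$, establishing $T_{i,n}\subset U_{i,n}$ for all admissible $i$ at once. The base case $n=1$ and, for every $n$, the case $i=0$ are disposed of at once by~\eqref{T0n}(a): property \Dz\ forces $f(k)=0$ for $k\le n$, so $T_{0,n}=\{1\}=U_{0,n}$. Hence I would assume $i\ge1$, where $U_{i,n}=\{i+1:n\}$, and produce the two bounds $i+1\le q(n)\le n$. The lower bound $q(n)\ge i+1$ is exactly lemma~\ref{min_T}(i); it also drops straight out of $q(n)=q(n-q(n-1))+i$ together with $q(n-q(n-1))\ge1$. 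So the substance is the upper bound.

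For the upper bound I would set $j:=n-q(n-1)$, so that~\eqref{rec} reads $q(n)=q(j)+i$. The inductive hypothesis gives $1\le q(n-1)\le n-1$, whence the index satisfies $1\le j\le n-1$; applying the inductive hypothesis at $j$ (legitimate since $j<n$) yields $q(j)\le j=n-q(n-1)$. Therefore
$$q(n)=q(j)+i\le n-q(n-1)+i,$$
and everything reduces to the single inequality $q(n-1)\ge i$.

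To obtain $q(n-1)\ge i$ I would use the inductive hypothesis at index $n-1$ together with the slowness of $f$. The inclusion $T_{f(n-1),n-1}\subset U_{f(n-1),n-1}$ forces $q(n-1)$ to be at least the smallest element of $U_{f(n-1),n-1}$, namely $f(n-1)+1$; and since $f$ has property \Dz\ we have $f(n-1)\ge f(n)-1=i-1$. Combining, $q(n-1)\ge f(n-1)+1\ge i$, and therefore $q(n)\le(n-i)+i=n$. This closes the induction and establishes $T_{i,n}\subset U_{i,n}$.

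The only genuine obstacle is the upper bound, and the temptation to resist is the crude estimate $q(n)=q(j)+i\le j+i\le(n-1)+i$, which already exceeds $n$ once $i\ge2$. What rescues the argument is that $j=n-q(n-1)$ cannot be large when $i$ is large: a big $f(n)=i$ propagates, through the lower bound on $q(n-1)$, into $q(n-1)\ge i$, which shifts the nested argument $j$ leftward by at least $i$ and so exactly pays for the added $+i$. Pinning down this compensation between the additive term $f(n)$ and the displacement of the nested index is the crux; it is also the same computation that, re-run as an induction without presupposing existence, yields $q(n)\le n$ for every $n$ and hence theorem~\ref{main}.
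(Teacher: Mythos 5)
Your proof is correct and follows essentially the same route as the paper's: strong induction on $n$, with the decisive step being that property \Dz\ forces $f(n-1)\geq i-1$, hence $q(n-1)\geq i$, so the nested index $n-q(n-1)$ is at most $n-i$ and the bound $q(n)\leq n$ follows. The paper phrases this same compensation via the inclusions $T_{i-1,k}\cup T_{i,k}\subset\{i:k\}$ and a union over rows of $\U$, but the underlying computation is identical to yours.
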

\begin{proof}
First, note that for $i=0$,~\eqref{T0n}(a) applies and gives $T_{0,k} = \{1\} =  U_{0,k}$. Furthermore, 
for $i=k$,~\eqref{T0n}(b) applies, giving $T_{k-1,k} = \{k\} = U_{k-1,k}$.
Both of these are true for all positive $k$.

The rest of the proof proceeds by strong induction, starting from $T_{0,1} = \{1\} = U_{0,1}$.
We assume that $T_{i,n} \subset U_{i,n}$ for $n = 1, \ldots, k$
and all allowed values of $i$, which assumption we refer to as (H). We then study what 
happens when $k\mapsto k+1$. The lemma having been proved for $i=0$ and $i=k$,
it only remains to study the case $1\leq i \leq k-1$.

Let us therefore consider $T_{i,k+1}$ with $1\leq i\leq k-1$. Equation~\eqref{rec} now reads 
$q(k+1) = q(k+1-q(k)) + i$ since, by definition, $f(k+1) = i$. We first need the
set of possible values of $\left.q(k)\right|_{f(k+1)=i}$, which, by (H), is 
$T_{i-1,k}\cup T_{i,k}\subset U_{i-1,k}\cup U_{i,k} = \{i:k\}$.
Only sets with indices $(i-1,k)$ and $(i,k)$ are allowed because $f$ has property \Dz:
$f(k+1)=i$ can only be true if $f(k) = i-1$ or $f(k)=i$.
Thus, $k+1-q(k)\in\{1:k-i+1\}$. Note that $2\leq k-i+1\leq k$, since $1\leq
i\leq k-1$, so $q(k+1-q(k))$ is defined only in terms of $q$ with argument
strictly less than $k+1$. Hence, informally put, $q(k+1-q(k))\in$ \{union of rows 1 to
$k-i+1$ of $\T\}$; that is, using assumption (H) and~\eqref{Udef},
\begin{equation}
\label{unions}
\left.q(k+1-q(k))\right|_{f(k+1)=i}\in\bigcup_{n=1}^{k-i+1}\bigcup_{j=0}^{n-1} T_{j,n}
\subset\bigcup_{n=1}^{k-i+1}\bigcup_{j=0}^{n-1} U_{j,n} = \{1:k-i+1\}.
\end{equation}
Therefore, for $1\leq i\leq k-1$, we have $q(k+1)\in\{i+1:k+1\}$ and so
$T_{i,k+1}\subset\{i+1:k+1\}$. Hence, from~\eqref{Udef}, we have $T_{i,k+1}\subset U_{i,k+1}$
and this completes the proof.
\end{proof}
\begin{corollary}
\label{bound}
Hence, for $n\in\mathbb N$, $q(n)\in\bigcup_{j=0}^{n-1} U_{j,n} = \{1:n\}$.
\end{corollary}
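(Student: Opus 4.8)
The plan is to read the corollary off directly from Lemma~\ref{TeqU} by evaluating the union $\bigcup_{j=0}^{n-1} U_{j,n}$ explicitly. First I would fix an arbitrary $f\in\F$ with $q = Q(f)$ and observe that, since $f$ has property \Dz, the value $f(n)$ must be some integer $i\in\{0:n-1\}$ (as already noted in the proof of Lemma~\ref{min_T}(ii), the requirement $f(n)=i$ forces $i\leq n-1$). By Definition~\ref{Tdef} we then have $q(n)\in T_{i,n}$, and Lemma~\ref{TeqU} gives $T_{i,n}\subset U_{i,n}$. Since $U_{i,n}$ is one of the sets appearing in the union, this already yields $q(n)\in\bigcup_{j=0}^{n-1} U_{j,n}$.

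The one genuinely computational step is to verify that this union collapses to $\{1:n\}$. Using the definition~\eqref{Udef}, the term $j=0$ contributes $\{1\}$, while for $j=1,\ldots,n-1$ the sets $U_{j,n}=\{j+1:n\}$ are nested, $U_{1,n}=\{2:n\}$ being the largest and containing all the others. Hence $\bigcup_{j=0}^{n-1} U_{j,n} = \{1\}\cup\{2:n\} = \{1:n\}$, and therefore $q(n)\in\{1:n\}$, i.e.\ $1\leq q(n)\leq n$, for every $n\in\mathbb N$ (the case $n=1$ being the trivial instance $U_{0,1}=\{1\}$).

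Since this bound holds for the $q$ corresponding to an arbitrary $f\in\F$, and since the existence of $q$ was shown in the preliminaries to be equivalent to $1\leq q(n)\leq n$ for all $n$, the corollary immediately delivers Theorem~\ref{main}. The subtlety worth flagging is not in the corollary itself --- which amounts to little more than the union computation --- but in ensuring the overall argument is not circular: the bound $q(n)\leq n$ is precisely what guarantees $n-q(n-1)\geq 1$, so that the nested term $q(n-q(n-1))$ in~\eqref{rec} never refers to an undefined index. This non-circularity is already secured by the strong-induction structure of Lemma~\ref{TeqU}, whose step $k\mapsto k+1$ uses only values $q(m)$ with $m\leq k$ that the inductive hypothesis has already confined to $\{1:m\}$; the corollary simply records the conclusion of that induction.
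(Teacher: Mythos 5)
Your proposal is correct and matches the paper's (implicit) argument: the corollary is stated without proof precisely because it follows from Lemma~\ref{TeqU} by the union computation you carry out, namely $U_{0,n}=\{1\}$ together with the nested sets $U_{j,n}=\{j+1:n\}$, whose union is $\{1:n\}$. Your remark on non-circularity --- that the bound being proved is what keeps the nested index $n-q(n-1)$ in range, and that this is secured by the strong induction inside Lemma~\ref{TeqU} rather than by the corollary itself --- is a fair and accurate gloss on how the paper's argument is structured.
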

Lemma~\ref{TeqU}, in particular corollary~\ref{bound}, shows that $1\leq q(n)\leq n$ for all
$n\in\mathbb N$ and so Theorem~\ref{main} is proven.

We return to $\T$. In words, $T_{i, n}$ is the set of all values that $q(n)$ 
attains in practice, given that $f(n)$, as $f$ ranges over $\F$, is equal to $i$. Clearly, since $f$ has
property \Dz, $0\leq i=f(n) \leq n-1$. It is important to note that
$\T$ is not equal to $\U$ because approximations were made in the proof of
lemma~\ref{TeqU} in order to compute the values that $q(n-q(n-1))$ can assume \textit{in principle}.
In particular, the unions in~\eqref{unions} are typically over more
--- and larger --- sets $U_{i,n}$ than necessary.
These approximations almost always overestimate, and never underestimate $U_{i,n}$, giving a collection of sets
$\U$ such that those in $\T$ are contained within the corresponding sets in $\U$:
that is $T_{i,n}\subset U_{i,n}$.

As stated earlier, $\T$ can be visualised as a triangular array of sets --- see figure~\ref{Tfig}.
This was computed by brute force. For instance, the last row corresponds to
$n=8$ and was computed by generating all $2^7$ sequences $f\in\F_8$,
finding the sequences $q$ corresponding to each and noting the different
values of $\left. q(8)\right|_{f(8) = i}$, for $0\leq i \leq 7$.
As an example, the fourth row of figure~\ref{Tfig} implies that, for instance,
\begin{itemize}
\item If $f(4) = 0$, $q(4)$ can only be 1, that is, $T_{0,4} = \{1\}$. This is obvious since 
the sequence $f$, $f\in\F_4$, can only be $(0, 0, 0, 0)$. See lemma~\ref{simp_q}.
\item Now suppose that $f(4) = 1$. In this case, the allowed
sequences $f$ are $(0,0,0,1)$, $(0,0,1,1)$ and $(0,1,1,1)$. Using~\eqref{rec}
to compute $q(4)$ in each case, we find that the first two give $q(4) = 2$ and the third
one, $q(4) = 3$.
Thus, $T_{1,4} = \{2, 3\}$ (cf.\ $U_{1,4} = \{2,3,4\}$).
\end{itemize}

\begin{figure}[htbp]
\centering
\begin{picture}(150,63)
\put(4,60){$n$}
\put(77,60){$T_{i,n}$}
\put(0,58.4){\line(1,0){150}}
\put(77,54){$\{1\}$}
\put(70,47){$\{1\}\qquad\{2\}$}
\put(62.7,40){$\{1\}\qquad\{2\}\qquad\{3\}$}
\put(49.5,33){$\{1\}\qquad\{2:3\}\qquad\{3:4\}\qquad\{4\}$}
\put(40,26){$\{1\}\qquad\{2:3\}\qquad\{3:4\}\qquad\{4:5\}\qquad\{5\}$}
\put(30,19){$\{1\}\qquad\{2:3\}\qquad\{3:4\}\qquad\{4:5\}\qquad\{5:6\}\qquad\{6\}$}
\put(19.8,12){$\{1\}\qquad\{2:4\}\qquad\{3:5\}\qquad\{4:6\}\qquad\{5:7\}\qquad\{6:7\}\qquad\{7\}$}
\put(9.4,5){$\{1\}\qquad\{2:4\}\qquad\{3:6\}\qquad\{4:7\}\qquad\{5:7\}\qquad\{6:8\}\qquad\{7:8\}\qquad\{8\}$}
\put(4,54){$1$}
\put(4,47){$2$}
\put(4,40){$3$}
\put(4,33){$4$}
\put(4,26){$5$}
\put(4,19){$6$}
\put(4,12){$7$}
\put(4,5){$8$}
\end{picture}
\caption{The first eight rows of the triangular array of sets $\T$. Here, $i$ and $n$ index the
sets horizontally and vertically respectively, with $i = 0,\ldots, n-1$. The
set of consecutive integers $k,\ldots, l$ is written $\{k:l\}$. }
\label{Tfig}
\end{figure}

\section{Examples of sequences $q$ that have property D}
\label{q_has_D}
In lemma~\ref{simp_q}, we gave the examples $f(n) = 0$ and $f(n) = n-1$,
both of which lead to sequences $q$ that also have property D.
We are aware of three other examples, which we now present.

The first example is given in
\begin{lemma}
\label{n_over_4}
$$\mbox{If } f(n) = \Bigl\lfloor{\frac{n+2}{4}}\Bigr\rfloor \mbox{ then }
q(n) = \Bigl\lfloor{\frac{n+2}{2}}\Bigl\rfloor.$$
\end{lemma}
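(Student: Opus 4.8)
The plan is to verify that the closed form $q(n)=\lfloor (n+2)/2\rfloor$ solves the recursion~\eqref{rec} for the given $f$, by strong induction on $n$. First I would note that $f(n)=\lfloor (n+2)/4\rfloor$ has property \Dz: indeed $f(1)=\lfloor 3/4\rfloor=0$, and since the argument $(n+2)/4$ increases by $1/4$ at each step, $f(n+1)-f(n)\in\{0,1\}$. Hence $f\in\F$, so by Theorem~\ref{main} the sequence $q=Q(f)$ exists and obeys $1\leq q(n)\leq n$; this guarantees that every argument appearing in the recursion is a legitimate index, so the only thing left to check is the explicit value.

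For the induction, the base case is $q(1)=1=\lfloor 3/2\rfloor$. For the inductive step, fix $n\geq 2$ and assume $q(k)=\lfloor (k+2)/2\rfloor$ for all $k<n$. The crucial simplification is to evaluate the nested argument $n-q(n-1)$. Using the hypothesis, $q(n-1)=\lfloor (n+1)/2\rfloor=\lceil n/2\rceil=n-\lfloor n/2\rfloor$, so that $n-q(n-1)=\lfloor n/2\rfloor$. Since $n\geq 2$ we have $1\leq\lfloor n/2\rfloor\leq n-1$, and so the hypothesis applies again at the index $\lfloor n/2\rfloor$, giving $q(\lfloor n/2\rfloor)=\lfloor(\lfloor n/2\rfloor+2)/2\rfloor=\lfloor\lfloor n/2\rfloor/2\rfloor+1=\lfloor n/4\rfloor+1$, where I use the standard nested-floor identity $\lfloor\lfloor n/2\rfloor/2\rfloor=\lfloor n/4\rfloor$.

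Substituting into~\eqref{rec} then reduces the whole lemma to the single elementary identity $\lfloor n/4\rfloor+1+\lfloor(n+2)/4\rfloor=\lfloor(n+2)/2\rfloor$, which I would establish by writing $n$ in each of the four residue classes modulo $4$ and checking the cases directly. This last identity is the only genuine computation and hence the main (if mild) obstacle: all of the recursive content is absorbed into the clean reduction $n-q(n-1)=\lfloor n/2\rfloor$, after which the argument is purely floor-function bookkeeping. Once the identity is verified, the inductive step is complete and the lemma follows for all $n\in\mathbb N$.
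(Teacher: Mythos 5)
Your proof is correct, but it follows a genuinely different route from the paper's. You argue by strong induction directly on the integer recursion: the key step is the clean reduction $n-q(n-1)=\lfloor n/2\rfloor$, after which the inductive hypothesis and the nested-floor identity $\lfloor\lfloor n/2\rfloor/2\rfloor=\lfloor n/4\rfloor$ reduce everything to the four-case check of $\lfloor n/4\rfloor+1+\lfloor(n+2)/4\rfloor=\lfloor(n+2)/2\rfloor$, which does hold in each residue class modulo $4$. The paper instead interpolates the claimed solution by the continuous function $\tilde{q}(x)=\tfrac{x}{2}+\tfrac{3+\cos\pi x}{4}$ and verifies by direct (trigonometric) calculation that $\tilde{q}(x)-\tilde{q}(x-\tilde{q}(x-1))$ equals a closed-form $\tilde{f}(x)$ agreeing with $\lfloor(n+2)/4\rfloor$ at the integers. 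The paper's method buys more than the lemma itself: it exhibits a continuous solution of~\eqref{rec} valid for all real $x$, which is the content of the remark that follows. Your method is more elementary and self-contained --- pure floor-function bookkeeping with no trigonometric identities --- and it automatically certifies well-definedness at every step since $1\leq\lfloor n/2\rfloor\leq n-1$ for $n\geq 2$ (which also makes your appeal to Theorem~\ref{main} redundant, though harmless and non-circular, as the lemma appears after that theorem is proved). Both are complete proofs; yours is arguably easier to verify line by line, while the paper's generalises off the integers.
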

\begin{proof}
Let
$$\tilde{q}(x) = \frac{x}{2} + \frac{3 + \cos\pi x}{4}$$
for $x\in\mathbb R$. Then it can easily be verified that $\tilde{q}(n) =
\lfloor\frac{n+2}{2}\rfloor$ for $n\in\mathbb N$. Furthermore, direct
calculation gives
$$\tilde{q}(x) - \tilde{q}(x-\tilde{q}(x-1)) = \frac{1}{8}(2x+1+\cos\pi x) -
\frac{1}{4}\sin\frac{\pi}{4}(2x + 1 + \cos\pi x) := \tilde{f}(x).$$
Finally, it is straightforward to check that, for $n\in\mathbb N$, $\tilde{f}(n) = \lfloor\frac{n+2}{4}\rfloor$
and this proves the lemma.
\end{proof}
\begin{remark}
We have proved more here, viz.\ that $\tilde{q}(x)$, $\tilde{f}(x)$ as given above obey
$\tilde{q}(x) = \tilde{q}(x-\tilde{q}(x-1)) + \tilde{f}(x)$ for all $x\in\mathbb R$:
a continuous solution to~\eqref{rec}. The same is true of $f_0(x) = 0$,
which implies that $q_0(x) = 1$, and $f_1(x) = x-1$, which implies that $q_1(x) = x$, 
both for $x\in\mathbb R$ --- an extension of lemma~\ref{simp_q}.
\end{remark}

For the second example, define 
\begin{equation}
\label{delta_def}
\delta(n) := \begin{cases} 1, & n=0\\ 0, & \text{otherwise,}\end{cases}
\end{equation}
for $n\in\mathbb Z$. Another instance of a sequence $q$ with property D
is found when $f(n) = 1 - \delta(n-1)$. A quick calculation gives the
values in table~\ref{f1}.
\begin{table}[h]
\centering
\begin{tabular}{|l|c|cc|ccc|cccc|ccccc|lc|} \hline
$n$        & 1 & 2 & 3 & 4 & 5 & 6 & 7 & 8 & 9 & 10 & 11 & 12 & 13 & 14 & 15 & 16 &\ldots\\ \hline
$q(n)$     & 1 & 2 & 2 & 3 & 3 & 3 & 4 & 4 & 4 & 4 & 5 & 5 & 5 & 5 & 5 & 6 &\ldots\\ \hline
$n-q(n-1)$ & -- & 1 & 1 & 2 & 2 & 3 & 4 & 4 & 5 & 6 & 7 & 7 & 8 & 9 & 10 & 11 & \ldots\\ \hline
\end{tabular}
\medskip
\caption{The first 16 terms of $q$ when $f(n) = 1 - \delta(n-1) = (0, 1, 1,\ldots)$.}
\label{f1}
\end{table}

The values of $n$ between which $q(n)$ increases are marked by vertical lines.
Note in particular from this table that the sequence $q$ has property D, at least up to $n = 16$.
The assumption that the pattern seen in the table is maintained indefinitely leads us to conjecture that
$$\mbox{For $k\in\mathbb N$, } q(n) = k \mbox{ for } h(k) \leq n \leq h(k+1)-1,$$
where $h(k) := \frac{k^2-k+2}{2}$. 

Bearing these observations in mind, we now prove
\begin{lemma}
\label{root2n}
Let $f(n) = 1 - \delta(n-1) = (0, 1, 1,\ldots)$. Then
$q(n) = (1, 2, 2, 3, 3, 3, 4,\ldots)$, where each positive integer appears in
turn, with integer $k$ occurring $k$ times in succession. That is,
$$q(n) = k\qquad\mbox{for}\qquad h(k)\leq n \leq h(k+1)-1,$$
where $k\in\mathbb N$ and $h(k) =  \frac{k^2-k+2}{2}$.
\end{lemma}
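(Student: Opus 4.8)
The plan is to prove the claim by strong induction on $n$, exploiting the block structure that the formula describes: for each $k\in\mathbb N$ the value $k$ should be attained on exactly the $k$ consecutive indices from $h(k)$ to $h(k+1)-1$. Before starting the induction I would record that $f(n)=1-\delta(n-1)$ satisfies $f(1)=0$ and $f(n+1)-f(n)\in\{0,1\}$, so $f\in\F$; hence by Theorem~\ref{main} the sequence $q=Q(f)$ exists, meaning $1\leq q(n)\leq n$, and in particular $1\leq n-q(n-1)\leq n-1$ for all $n$, so every argument appearing in~\eqref{rec} is a legitimate strictly earlier index. I would also note that for $n\geq 2$ we have $f(n)=1$, so the recursion reads simply $q(n)=q(n-q(n-1))+1$.

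The two arithmetic identities that drive the argument are $h(k+1)-h(k)=k$ and $h(k)-(k-1)=h(k-1)$ (equivalently $h(k+1)-k=h(k)$ and $h(k)-k=h(k-1)-1$), all of which follow by direct computation from $h(k)=\tfrac{1}{2}(k^2-k+2)$. The first identity says that block $k$ genuinely has length $k$; the second is exactly what makes the recursion close up.

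For the inductive step, assume the claimed formula holds for all indices smaller than $n$, and let $k$ be the unique integer with $h(k)\leq n\leq h(k+1)-1$. I would split into two cases. If $n=h(k)$ is the first index of its block, then $n-1=h(k)-1$ is the last index of block $k-1$, so by the inductive hypothesis $q(n-1)=k-1$; then $n-q(n-1)=h(k)-(k-1)=h(k-1)$, and since $q(h(k-1))=k-1$ we obtain $q(n)=(k-1)+1=k$. If instead $h(k)<n\leq h(k+1)-1$, then $n-1$ still lies in block $k$, so $q(n-1)=k$ and $n-q(n-1)=n-k$; using $h(k)-k=h(k-1)-1$ and $h(k+1)-k=h(k)$ one checks that as $n$ ranges over $\{h(k)+1,\ldots,h(k+1)-1\}$ the index $n-k$ ranges over exactly $\{h(k-1),\ldots,h(k)-1\}$, i.e.\ over the whole of block $k-1$, so $q(n-k)=k-1$ and again $q(n)=k$. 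The base case $n=1$ is immediate, since $q(1)=1$, $h(1)=1$ and $h(2)-1=1$. This completes the induction and establishes both that $q$ has property D and the explicit formula.

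I do not expect any genuinely hard step: the content lies entirely in the two index identities and in the clean fact that the $k-1$ non-initial indices of block $k$ map, under subtraction of $k$, bijectively onto the whole of block $k-1$. The only point requiring care is the bookkeeping that keeps the argument $n-q(n-1)$ strictly below $n$ (guaranteed by existence) so that strong induction is legitimately applicable, exactly the same subtlety that arises in the shift property, lemma~\ref{shift}.
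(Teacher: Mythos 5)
Your proof is correct and follows essentially the same route as the paper's: an induction built on the block structure of $q$ and the key identities $h(k+1)-h(k)=k$ and $h(k)-(k-1)=h(k-1)$, so that the argument $n-q(n-1)$ always lands in the previous block. The only difference is organizational --- you run a single strong induction on $n$ with a case split on whether $n$ starts its block, whereas the paper inducts on the block index $k$ with an inner induction along the block --- but the underlying computation is identical.
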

\begin{proof}
This sequence~\cite{oeis_d}, is a special case of a generalised Golumb triangular recursion~\cite{ikt}.
Once again, the proof proceeds by induction.
Calculation gives $q(1) = 1$, $q(2) = q(3) = 2$, so set $k>2$ and make the hypothesis
\begin{equation}
\label{H1}
q(n) = k\qquad\mbox{for}\qquad h(k)\leq n\leq h(k+1)-1.\tag{H1}
\end{equation}
We show that this implies that
$$q(n) = \begin{cases} k+1 & \mbox{for} \qquad h(k+1)\leq n\leq h(k+2)-1, \mbox{ and}\\
k+2 & \mbox{for}\qquad n = h(k+2).\end{cases}$$
Let $m = h(k+1)$. Then~\eqref{rec} gives
$q(m) = q(m-q(m-1))+1 = q(m-k)+1$ by~\eqref{H1}.
Now, $m-k = \frac{k^2-k+2}{2} = h(k)$, so, by~\eqref{H1}, $q(m-k) = k$ and
hence $q(m) = k+1$.

Now consider $q(m+i+1)$ for $0\leq i \leq k-1$. Starting from base case $q(m)=k+1$, we
use induction again, this time on $i$, to prove that 
\begin{equation}
\label{H2}
q(m+i) = k+1 \Rightarrow q(m+i+1) = k+1 \qquad\mbox{for}\qquad 0\leq i \leq k-1\tag{H2}.
\end{equation}
By~\eqref{rec}, we have $q(m+i+1) = q(m+i+1-q(m+i))+1 = q(m-k+i)+1$,
the last by~\eqref{H2}. Now, $m-k+i = \frac{k^2-k+2}{2}+i = h(k)+i$, and since
$0\leq i \leq k-1$, we have $q(m-k+i) = k$, by~\eqref{H1}.
Hence,~\eqref{H2} is proven.

Finally, consider $q(m+k+1) = q(m+k+1-q(m+k))+1 = q(m)+1 = k+2$, which
completes the proof.
\end{proof}
\begin{remark}
This result can also be written as
$$f(n) = 1-\delta(n-1)\Rightarrow q(n) = \Bigl\lfloor{\frac{1}{2} + \sqrt{2n - \frac{7}{4}}}\Bigr\rfloor =: w(n),$$
as can be easily shown by considering $\sqrt{2 h(k)-7/4} = k-1/2$ for $k\in\mathbb N$.
However, we lack a direct proof of lemma~\ref{root2n} starting from this expression.
\end{remark}
\begin{remark}
This result relates immediately to the fact that $T_{1,n}\subsetneq U_{1,n}$:
we have explicitly constructed $T_{1,n} = \{2: w(n)\}$ above, from which it is clear that $T_{1,n}\subsetneq
U_{1,n}$ for $n >2$. See the second top-right to bottom-left diagonal in figure~\ref{Tfig}.
\end{remark}
The third example of a sequence $q$ with property D is especially interesting and involves
$\gamma$, the reciprocal of the golden ratio:
\begin{theorem}
\label{mysterious}
Let $\gamma := \frac{\sqrt{5}-1}{2}$, so that $\gamma^2 + \gamma - 1 = 0$.
Then $q(n) = 1 +\lfloor\gamma(n-1)\rfloor$ obeys~\eqref{rec} with
$f(n) = \lfloor\gamma^2 n\rfloor$.
\end{theorem}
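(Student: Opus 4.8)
The plan is to verify \eqref{rec} directly for the two proposed closed forms, reducing the whole recursion to a single elementary identity in one fractional variable. The base case is immediate: $q(1) = 1 + \lfloor \gamma\cdot 0\rfloor = 1$, as required. For $n\geq 2$, writing $q(n) = 1 + \lfloor\gamma(n-1)\rfloor$ and $f(n) = \lfloor\gamma^2 n\rfloor$, the claim $q(n) = q(n-q(n-1)) + f(n)$ is equivalent, after cancelling the additive $1$'s, to the floor identity
$$\lfloor\gamma(n-1)\rfloor = \lfloor\gamma(n-q(n-1)-1)\rfloor + \lfloor\gamma^2 n\rfloor,$$
where $q(n-1) = 1 + \lfloor\gamma(n-2)\rfloor$, so that the nested argument simplifies to $n-q(n-1)-1 = (n-2) - \lfloor\gamma(n-2)\rfloor$.

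Next I would introduce $M := n-2$, $p := \lfloor\gamma M\rfloor$ and $\theta := \{\gamma M\}$, noting that $\gamma$ irrational forces $\theta\in(0,1)$ for $M\geq 1$. The crucial step is to collapse the nested middle term. Using $\gamma^2 = 1-\gamma$ one finds $\gamma(M-p) = \gamma^2 p + \theta$, and then eliminating $p$ via $\gamma p = \gamma^2 M - \gamma\theta$ and $\gamma^2 M = M - p - \theta$ turns this into $(2p-M) + (2+\gamma)\theta$; since $2p-M\in\mathbb Z$, the middle floor equals $(2p-M) + \lfloor(2+\gamma)\theta\rfloor$. Treating the remaining two terms the same way — in particular using $\{\gamma^2 M\} = 1 - \theta$ for the $\lfloor\gamma^2 n\rfloor$ term, which is what produces a ceiling — the integer contributions $p$ and $M$ cancel throughout, and the whole recursion reduces to the single one-variable identity
$$\lfloor\theta + \gamma\rfloor = 2 + \lfloor(2+\gamma)\theta\rfloor - \lceil\theta + 2\gamma\rceil, \qquad \theta\in[0,1).$$

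Finally I would verify this by a short case analysis. The convenient relation $2 + \gamma = \gamma^{-2}$ shows that both sides change only at $\theta = \gamma^2$ and $\theta = 2\gamma^2$, so I would split the range into $[0,\gamma^2)$, $(\gamma^2, 2\gamma^2)$ and $(2\gamma^2, 1)$ and check that the three cases give $0 = 2+0-2$, $1 = 2+1-2$ and $1 = 2+2-3$ respectively. Irrationality of $\gamma$ guarantees that $\theta$ never lands exactly on $\gamma^2$ or $2\gamma^2$ (each equality would force $\gamma(M+1)$ or $\gamma(M+2)$ to be an integer), so the open intervals suffice; the endpoint $\theta = 0$, which occurs only for $M=0$, i.e.\ $n = 2$, falls into the first case and is checked directly. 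The main obstacle is the bookkeeping in the reduction step: one must organise the substitutions $\gamma^2 = 1-\gamma$, $\gamma^3 = 2\gamma-1$, $2+\gamma = \gamma^{-2}$ and $\{\gamma^2 M\} = 1-\theta$ so that every integer term cancels and a clean identity in $\theta$ alone survives. Once that cancellation is secured, the three-way case check is entirely routine.
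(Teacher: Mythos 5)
Your proof is correct and follows essentially the same route as the paper: both reduce the recursion to a single floor-function identity and verify it by a three-way case split on a fractional part, using the key relation $\gamma^2(\gamma+2)=1$ and the irrationality of $\gamma$ to rule out the interval endpoints. The only differences are organisational: you parametrise by $\theta=\{\gamma(n-2)\}$ with breakpoints $\gamma^2$ and $2\gamma^2$ and complete the reduction to a one-variable identity before casing, whereas the paper works with $\{\gamma^2 n\}$ (breakpoints $\gamma^2$ and $\gamma$) and carries out part of the algebra inside each case.
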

Since $\gamma \approx 0.618 < 1$, clearly $q(n)$ as defined here has
property D, and, furthermore, $q(1) = 1$. To prove the theorem, we first show that 
it is equivalent to an identity,~\eqref{the_id}, which we then prove in
lemma~\ref{proof_id}.

In the course of the proof of this lemma, we need a few small results:
\begin{enumerate}[label={(\roman*)}]
\item $\gamma^2 + \gamma = 1$, which implies that $2\gamma^2 + 3\gamma = 2 +
\gamma$, $\gamma^{-1} = 1 + \gamma$, $\gamma^2(\gamma+2) = 1$.
\item $x = \lfloor x\rfloor + \fr{x}$, $x\in\mathbb R$, where $\fr{x}$ is the fractional part of $x$
\item $\lfloor -x\rfloor = -\lfloor x\rfloor - 1$, $x\notin\mathbb Z$
\item $\lfloor m+x\rfloor = m+\lfloor x\rfloor$ for $m\in\mathbb N$
\item $\fr{\gamma^2 n} = 1 - \fr{\gamma n}$.
\end{enumerate}
Items (i) come from the given definition of $\gamma$ and (ii) -- (iv)
are from the definition of the floor function. For (v), start from (ii)
with $x=\gamma^2 n$, which gives $\fr{\gamma^2 n} = \gamma^2 n - \flgg{n}$.
Now use (i) to replace $\gamma^2$ with $1-\gamma$, giving 
$\fr{\gamma^2 n} = n - \gamma n - \lfloor{n - \gamma n}\rfloor = -\gamma n -
\lfloor{-\gamma n}\rfloor$, where (iv) has been used. Then (iii) gives
$\fr{\gamma^2 n} = -\gamma n + \lfloor{\gamma n}\rfloor+1 = 1-\fr{\gamma n}$,
using (ii).

Starting from $q(n) = 1 + \flg{(n-1)}$, clearly $q(1) = 1$. It is convenient to shift $n$ by 1,
giving $q(n+1) = 1 + \flg{n}$, which we will show obeys~\eqref{rec} for
$n\in\mathbb N$.  When this is substituted in $q(n+1) - q(n+1-q(n))$ we have
$$q(n+1) - q(n+1-q(n)) = \flg{n} - \lfloor{\gamma(n-1) - \gamma\flg{(n-1)}}\rfloor.$$
Now, using (i), (iii) and (iv), we find that
$$q(n+1) - q(n+1-q(n)) = n-1 - \flgg{n} - \lfloor{\gamma +\gamma\flgg{(n-1)}}\rfloor.$$
Hence, theorem~\ref{mysterious} is true if the right-hand side of the above
expression is equal to $f(n+1) = \flgg{(n+1)}$ for $n\in\mathbb N$, that is, if
\begin{equation}
\label{the_id}
\theta(n) := \lfloor{\gamma +\gamma\flgg{(n-1)}}\rfloor + \flgg{n} + \flgg{(n+1)} = n-1
\;\;\mbox{ for }\;\; n\in\mathbb N.
\end{equation}

This identity~\eqref{the_id} is proved in
\begin{lemma}
\label{proof_id}
If $\gamma$ is as defined in theorem~\ref{mysterious}, then $\theta(n)$,
as defined in equation~\eqref{the_id}, obeys $\theta(n) = n-1$ for all $n\in\mathbb N$.
\end{lemma}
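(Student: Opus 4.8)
The plan is to collapse the whole identity onto the single fractional part $s:=\{\gamma(n-1)\}$ and then finish with a short three-way case split. I would dispose of $n=1$ at once by direct evaluation, since $\theta(1)=\lfloor\gamma\rfloor+\lfloor\gamma^2\rfloor+\lfloor 2\gamma^2\rfloor=0$. For $n\ge 2$ I set $N:=n-1\ge 1$, $b:=\lfloor\gamma N\rfloor$ and $s:=\{\gamma N\}$; because $\gamma N$, $\gamma n$ and $\gamma(n+1)$ are all irrational we have $s\in(0,1)$ and the hypothesis of (iii) is always met, so no boundary cases will ever occur.

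The key preliminary is the conversion $\lfloor\gamma^2 k\rfloor = k-1-\lfloor\gamma k\rfloor$ for every integer $k\ge 1$, which follows from (ii), (v) and $\gamma^2=1-\gamma$ in (i). Applying it with $k=N$ turns the nested first term into $\lfloor\gamma+\gamma\lfloor\gamma^2 N\rfloor\rfloor=\lfloor\gamma+\gamma(N-1-b)\rfloor=\lfloor\gamma P\rfloor$, where I abbreviate $P:=N-b$. Applying the same conversion to $\lfloor\gamma^2 n\rfloor$ and $\lfloor\gamma^2(n+1)\rfloor$, and writing $\lfloor\gamma n\rfloor=b+\epsilon_1$ and $\lfloor\gamma(n+1)\rfloor=b+\epsilon_2$ with $\epsilon_1\in\{0,1\}$ and $\epsilon_2\in\{1,2\}$ (legitimate since $\gamma n=\gamma N+\gamma$ and $\gamma(n+1)=\gamma N+2\gamma$ exceed $\gamma N$ by less than $2$), the target identity $\theta(n)=n-1$ collapses to
\[
\lfloor\gamma P\rfloor + P = b+\epsilon_1+\epsilon_2-1 .
\]

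To evaluate the left-hand side I would invoke the golden-ratio relations once more. Since $P\ge 1$ is an integer and $\gamma^{-1}=1+\gamma$ by (i), (iv) gives $\lfloor\gamma P\rfloor+P=\lfloor\gamma P+P\rfloor=\lfloor P/\gamma\rfloor$. Writing $P=N-b=\gamma^2 N+s$ (the conversion again, via $\gamma^2=1-\gamma$) and using $\gamma^{-2}=2+\gamma$ from (i) yields $P/\gamma=b+s/\gamma^2$, so that $\lfloor P/\gamma\rfloor=b+\lfloor s/\gamma^2\rfloor$. Thus the whole theorem reduces to the single clean statement $\lfloor s/\gamma^2\rfloor=\epsilon_1+\epsilon_2-1$. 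This I would settle by noting that $\epsilon_1$ jumps from $0$ to $1$ exactly as $s$ crosses $\gamma^2$ (because $s+\gamma\ge 1$ iff $s\ge 1-\gamma=\gamma^2$) and $\epsilon_2$ jumps from $1$ to $2$ exactly as $s$ crosses $2\gamma^2$ (because $s+2\gamma\ge 2$ iff $s\ge 2-2\gamma=2\gamma^2$); these are precisely the thresholds at which $\lfloor s/\gamma^2\rfloor$ steps through $0,1,2$, and irrationality of $\gamma$ forbids $s$ from ever equalling a threshold.

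I expect the unfolding of the nested floor to be the main obstacle. The argument $\gamma^2(n-1)$ grows without bound, so $\lfloor\gamma^2(n-1)\rfloor$ is not a linear function of $n$, and the reductions $\lfloor\gamma+\gamma\lfloor\gamma^2 N\rfloor\rfloor=\lfloor\gamma P\rfloor$ and $\lfloor P/\gamma\rfloor=b+\lfloor s/\gamma^2\rfloor$ work only because the identities in (i) cause the entire $b$-dependence to telescope away, leaving a quantity controlled by $s$ alone. Once the problem has been brought to the form $\lfloor s/\gamma^2\rfloor=\epsilon_1+\epsilon_2-1$, nothing remains but the bookkeeping of the three cases.
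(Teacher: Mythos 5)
Your proof is correct. Every step checks out: the base case $n=1$ is verified directly; the conversion $\lfloor\gamma^2 k\rfloor = k-1-\lfloor\gamma k\rfloor$ is a valid consequence of (i)--(iii) for $k\geq 1$; the collapse of the nested term to $\lfloor\gamma P\rfloor$ with $P=N-b$ is right; the identities $\lfloor\gamma P\rfloor+P=\lfloor P/\gamma\rfloor$ and $P/\gamma=b+s/\gamma^2$ follow from $\gamma^{-1}=1+\gamma$ and $\gamma^{-2}=2+\gamma$; and the final three-way check of $\lfloor s/\gamma^2\rfloor=\epsilon_1+\epsilon_2-1$ is immediate once one notes $1-\gamma=\gamma^2$ and $2-2\gamma=2\gamma^2$, with irrationality ruling out the thresholds. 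The route is genuinely different in organization from the paper's, though the same golden-ratio relations drive both. The paper cases first on where $\{\gamma^2 n\}$ falls in $(0,\gamma^2)$, $(\gamma^2,\gamma)$, $(\gamma,1)$ --- the case hypothesis fixes the relative values of $\lfloor\gamma^2(n-1)\rfloor$, $\lfloor\gamma^2 n\rfloor$, $\lfloor\gamma^2(n+1)\rfloor$ --- and then, within each case, massages $\theta$ into $n-1-\lfloor(\gamma+2)\{\gamma^2 n\}-c\rfloor$ and bounds the argument; the algebra is repeated three times with small variations. You instead perform a single uniform algebraic reduction, tracking the three floor terms through one base point $b=\lfloor\gamma(n-1)\rfloor$ and the increments $\epsilon_1,\epsilon_2$, and defer all case analysis to the one-line statement $\lfloor s/\gamma^2\rfloor=\epsilon_1+\epsilon_2-1$. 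Since $\gamma+2=\gamma^{-2}$, the bounded quantity you end up flooring is essentially the same object as the paper's, just expressed through $s=\{\gamma(n-1)\}$ rather than $\{\gamma^2 n\}$. What your version buys is less duplicated computation and a cleaner endgame (plus an explicit treatment of $n=1$, which in the paper's scheme sits exactly on the boundary $\{\gamma^2\cdot 1\}=\gamma^2$ between cases A and B); what the paper's buys is that each case is self-contained and directly exhibits $\theta$ in the form ``$n-1$ minus a floor that vanishes or equals the correction term.''
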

\begin{proof}
We consider separately three cases which are demarcated according to
$$\mbox{Case A: }\fr{\gamma^2 n}\in(0, \gamma^2);\qquad
\mbox{Case B: }\fr{\gamma^2 n}\in(\gamma^2, \gamma);\qquad
\mbox{Case C: }\fr{\gamma^2 n}\in(\gamma, 1).$$
The intervals are all open because there is no $n\in\mathbb N$ such that 
$\fr{\gamma^2 n} = 0$ or $\gamma$, and no $n>1$ such that
$\fr{\gamma^2 n} = \gamma^2$, all three because $\gamma^2 = (3-\sqrt{5})/2$ is irrational.
For instance, $\fr{\gamma^2 n} = \gamma$ implies that $\gamma^2 n = m
+\gamma = m + 1 - \gamma^2$, where $m$ is an integer, and hence, that
$\gamma^2 = (m+1)/(n+1)$ --- a contradiction. A parallel argument can be
applied to show the impossibility of $\fr{\gamma^2 n} = 0$ or $\gamma^2$, the latter with
$n>1$.  Finally, by definition, $\fr{x}<1$.

Proving cases A--C amounts primarily to rearranging $\theta(n)$ into a first-degree
polynomial in $n$ plus the floor of a bounded function of $n$.

{\bf Case A.} Note that $\fr{\gamma^2 n}\in(0,\gamma^2)$ implies that
$\flgg{(n-1)} = \flgg{(n)}-1$ and $\flgg{(n+1)} = \flgg{(n)}$. Thus, in
case A we have
$$\theta(n) = \lfloor{\gamma\flgg{n}}\rfloor + 2\flgg{n} =: \theta_A(n).$$
Consider the first term in $\theta_A(n)$:
$$\lfloor{\gamma\flgg{n}}\rfloor = \lfloor{\gamma\lfloor{(1-\gamma)n}\rfloor}\rfloor
= \lfloor{\gamma n - \gamma(\flg{n}+1)}\rfloor
= \lfloor{\gamma n - \gamma(\gamma n -\fr{\gamma n}+1)}\rfloor, $$
where we have used (i), (iv), (iii) and then (ii). Replacing the first $\gamma n$ with
$(1-\gamma^2)n$, we get
$$\lfloor{\gamma\flgg{n}}\rfloor 
= n + \lfloor{-2\gamma^2 n +\gamma\fr{\gamma n}-\gamma}\rfloor
= n-1 - \lfloor{2\gamma^2 n -\gamma\fr{\gamma n} + \gamma}\rfloor,$$
using (iii). We can now use (ii) and (v) to obtain $\lfloor{\gamma\flgg{n}}\rfloor =$
$$ n-1 -\lfloor{2(\flgg{n} + \fr{\gamma^2 n}) +\gamma(1-\fr{\gamma n})}\rfloor
= n-1 -\lfloor{2(\flgg{n} + \fr{\gamma^2 n}) +\gamma\fr{\gamma^2 n}}\rfloor. $$
Hence, using (iv),
$$\theta_A(n) 
= n-1 -\lfloor{2(\flgg{n} + \fr{\gamma^2 n}) +\gamma\fr{\gamma^2 n}}\rfloor +2\flgg{n}
= n-1 - \lfloor{(\gamma+2)\fr{\gamma^2 n}}\rfloor, $$
and $\theta_A(n)$ is now in the required form.

Since $0 < \fr{\gamma^2 n} < \gamma^2$, we have, in case A,
$0 < (\gamma+2)\fr{\gamma^2 n} < \gamma^2(\gamma+2) = 1$,
using (i), and so $\lfloor{(\gamma+2)\fr{\gamma^2 n}}\rfloor = 0$.
Therefore, $\theta_A(n) = n-1$.

{\bf Case B.} For case B, we have $\gamma^2 <\fr{\gamma^2 n}< \gamma = 1-\gamma^2$
and this implies that $\flgg{(n-1)} = \flgg{n} = \flgg{(n+1)}$. Hence, in case B
$$\theta(n) = \lfloor{\gamma+\gamma\flgg{n}}\rfloor + 2\flgg{n} =: \theta_B(n).$$
Since $2\flgg{n}$ is an integer, we immediately have $\theta_B(n) = \lfloor{\gamma+(\gamma+2)\flgg{n}}\rfloor$.
Now,
$$(\gamma+2)\flgg{n} = (\gamma+2)(\gamma^2 n - \fr{\gamma^2 n})
= n -(\gamma+2)\fr{\gamma^2 n}, $$
by (ii) and (i). Hence, 
\begin{equation}
\label{thB}
\theta_B(n) = n + \lfloor{\gamma-(\gamma+2)\fr{\gamma^2 n}}\rfloor
= n-1 - \lfloor{(\gamma+2)\fr{\gamma^2 n}-\gamma}\rfloor,
\end{equation}
using (iii). Now, by the definition of case B, 
$$\gamma^2(\gamma+2) < (\gamma+2)\fr{\gamma^2 n} < \gamma(\gamma+2)
\qquad\mbox{or}\qquad
1 < (\gamma+2)\fr{\gamma^2 n} < \gamma+1.$$
Hence, $\lfloor{(\gamma+2)\fr{\gamma^2 n}-\gamma}\rfloor = 0$ and
$\theta_B(n) = n-1$.

{\bf Case C.} Case C is similar to case B. We now have $\gamma <\fr{\gamma^2 n}< 1$, giving
$\flgg{(n-1)} = \flgg{n}$ but $\flgg{(n+1)} = \flgg{n}+1$. Hence, in case C
$$\theta(n) = \lfloor{\gamma+\gamma\flgg{n}}\rfloor + 2\flgg{n}+1 =: \theta_C(n).$$
From the definition of $\theta_B(n)$, we see immediately that $\theta_C(n) = \theta_B(n)+1$ and
so, from~\eqref{thB}, we have
$\theta_C(n) =  n - \lfloor{(\gamma+2)\fr{\gamma^2 n}-\gamma}\rfloor$.
Hence, for case C,
$$\gamma(\gamma+2) < (\gamma+2)\fr{\gamma^2 n} < (\gamma+2)
\qquad\mbox{or}\qquad
1 < (\gamma+2)\fr{\gamma^2 n} - \gamma < 2.$$
Therefore, $\lfloor{(\gamma+2)\fr{\gamma^2 n}-\gamma}\rfloor = 1$ and so
$\theta_C(n) = n-1$, concluding the proof.
\end{proof}
The identity in lemma~\ref{proof_id} was shown to be equivalent to theorem~\ref{mysterious},
which we have therefore proved.

\section{Heuristic and experimental results}
\label{heuristic}

Nothing is proved in this section. Instead, heuristic plausibility arguments are used to give 
simple descriptions of the behaviour of $q$ for certain sequences $f$. Computation suggests
that the actual behaviour of $q$ follows our predictions closely,
as we shall see. We consider three types of sequence $f$.

\subsection{$\bm{f(n) = \lfloor\alpha n\rfloor}$, $\bm{\alpha\in(0, 1)}$}
The argument here is very simple: for large $n$, we approximate
$\lfloor\alpha n\rfloor$ by $\alpha n$ and use the ansatz $q(n) = an$,
where $a$ is to be found.
Substituting this in~\eqref{rec}, we obtain $a^2 n - a^2 = \alpha n$, from which,
neglecting the $-a^2$ term, we obtain $a = \sqrt\alpha$. This suggests the
approximation 
\begin{equation}
\label{linf}
q(n)\approx \sqrt\alpha n,
\end{equation}
which turns out to be very good. See figure~\ref{qhalf} for an example. This shows $q(n) -
n/\sqrt{2}$ when $f(n) = \lfloor n/2\rfloor$: for $n\leq 160000$, we find
from the data used to produce this figure that $|q(n)-n/\sqrt{2}| < 75.5$.

Theorem~\ref{mysterious} is a particular case of equation~\eqref{linf} with
$\alpha = \gamma^2$.

\subsection{$\bm{f(n) = a\mbox{ for } n > n'}$}
\label{f2const}
The case $f(n)$ tends to a constant $a\in\mathbb N$ as $n\to\infty$ is
interesting. We argue in reverse, proposing the ansatz $q(x) = \sqrt{2ax} - b$,
$x\in\mathbb R$, and then deducing the $f(x)$ that this implies. Starting from the
asymptotic expansion:
$$f(x) = q(x) - q(x-q(x-1)) \sim 
a + \frac{\sqrt{2 a} (a-2b)}{4\sqrt x} + \frac{a(a - 2b - 2)}{4x} + O\left(x^{-\frac{3}{2}}\right),$$
we let $b = a/2$ in order to eliminate the second term. This gives
$$f(x) \sim a - \frac{a}{2x} + O\left(x^{-\frac{3}{2}}\right).$$
This short calculation suggests that, for $a, n\in\mathbb N$,
\begin{equation}
\label{fconst}
f(n) = a - \lfloor{\delta_1(n)}\rfloor \implies q(n) = \sqrt{2an}-a/2 + \delta_2(n),
\end{equation}
where $\delta_1(1) = a$ and $\delta_i(n)\to 0$ as $n\to\infty$, $i=1,2$.

\begin{centering}
\begin{figure}[htbp]
\includegraphics[width=4.0in]{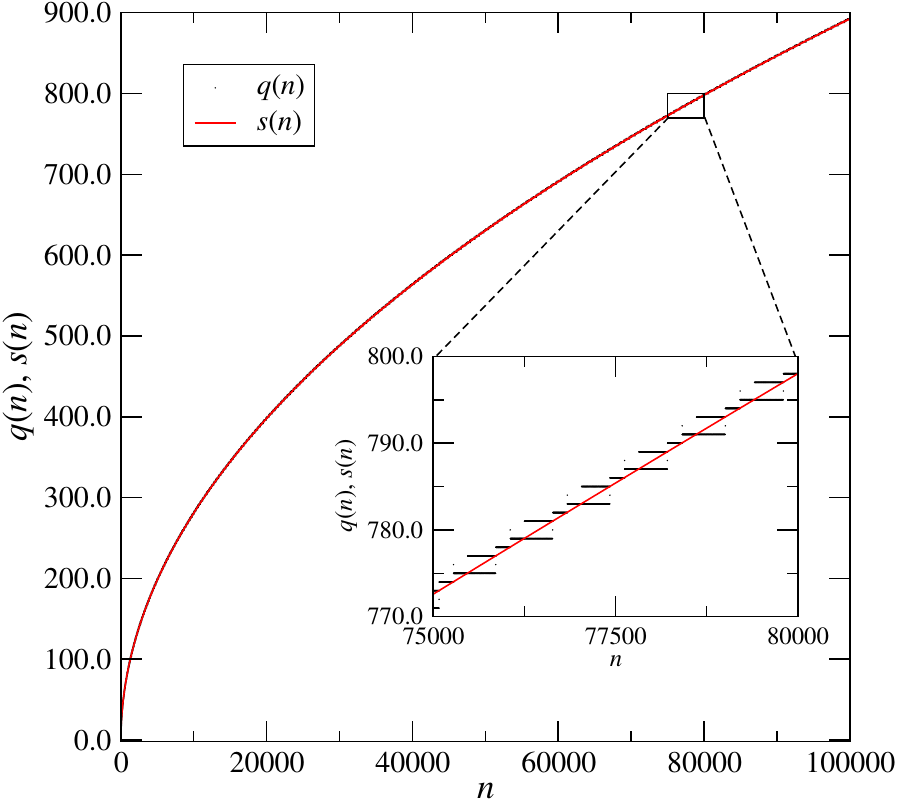}
\caption{Plot of $q(n)$, black dots, for $f(n) = \lfloor 5 - 5/\sqrt n\rfloor$, so
that $f(n)\to 4$ as $n\to\infty$. According to the argument in
section~\ref{f2const}, we expect that $q(n)\sim \sqrt{8n} - 2 := s(n)$,
this curve being plotted in red. The approximation appears to hold remarkably well,
the two plots being almost exactly superimposed.
Inset: enlargement of the region $75000\leq n \leq 80000$.}
\label{ascon}
\end{figure} 
\end{centering}

This appears to work surprisingly well --- see figure~\ref{ascon}, which
compares $q$ when $f(n) = \lfloor 5 - 5/\sqrt n\rfloor)$, so that
$\lim_{n\to\infty} f(n) = 4$, with $s(n) := \sqrt{8n} - 2$
for $n = 1, \ldots, 10^5$. Over this range, we find that $|q(n)-s(n)| < 2$.

Other examples were tried: 
$f(n) = \lfloor a - a\exp{(-b n)}\rfloor$,
$f(n) = \lfloor a - a/n^b\rfloor$ and 
$f(n) = \lfloor an\rfloor$ if  $n<n_0$ and $\lfloor a n_0\rfloor$ otherwise.
In all cases, $a, b$ were chosen so as to ensure that $f(n)$ had property
\Dz, and every time, \eqref{fconst} gave an excellent approximation to
$q(n)$: in these cases at least, it is the asymptotic behaviour
of $f(n)$ that determines the asymptotic behaviour of $q(n)$.

\subsection{$\bm{f(n) = \lfloor{c_1 n^{p_1}+c_2 n^{p_2} +\ldots}\rfloor}$ with $\bm{p_i\in(0,1)}$}
Arguing in reverse as in the previous case, we can obtain useful approximations to
$q(n)$ when $f(n)$ is the sum of fractional powers of $n$, at least in certain special cases.
We use the ansatz $q(x) = a x^p+b$ and calculate the asymptotic expansion for 
$$ \begin{aligned}
f(x) & = q(x) - q(x - q(x-1)) \sim ap\left[b\,x^{p-1} + \frac{b^2 (1 - p)}{2}\,x^{p-2} + \ldots\qquad\right. \\
& + a\,x^{2p - 1}
+ a (b(1-p) - p)\,x^{2p - 2}+\ldots\\
& \left. + \frac{a^2 (1 - p)}{2}\,x^{3p - 2}
+ \frac{a^2 (1-p)(b(2-p)-2p)}{2} \,x^{3p - 3}+\ldots\right].
\end{aligned} $$
Note that the powers of $x$ here are $p-i$, $i\in\mathbb N$ and $ip-j$,
$i\geq 2$, $j \geq i-1$. We cannot order the powers of $x$ unless a value of $p$ is
specified. Letting, for example, $a = 1$, $b=1/2$ and $p = 3/4$, we have
$f(x) = 3x^{1/2}/4 + 3 x^{1/4}/32 + 5/128 + O(x^{-1/4})$ and $q(x) = x^{3/4} + 1/2$.
Then, with $f(n) = \lfloor 3n^{1/2}/4 + 3 n^{1/4}/32 + 5/128\rfloor$, which has
property \Dz, we generate $q(n)$ via~\eqref{rec} and compare this with
our approximation $q'(n) := n^{3/4} + 1/2$. The agreement is good, with 
$-21\leq q(n) - q'(n)\leq 3$ for $1\leq n \leq N = 160000$, where $q(N) = 7990$.

\subsection{Dynamics} We close this section with some figures illustrating 
examples of the dynamics of $q$ for particular sequences $f$. 

\begin{centering}
\begin{figure}[htbp]
\includegraphics[width=4.0in]{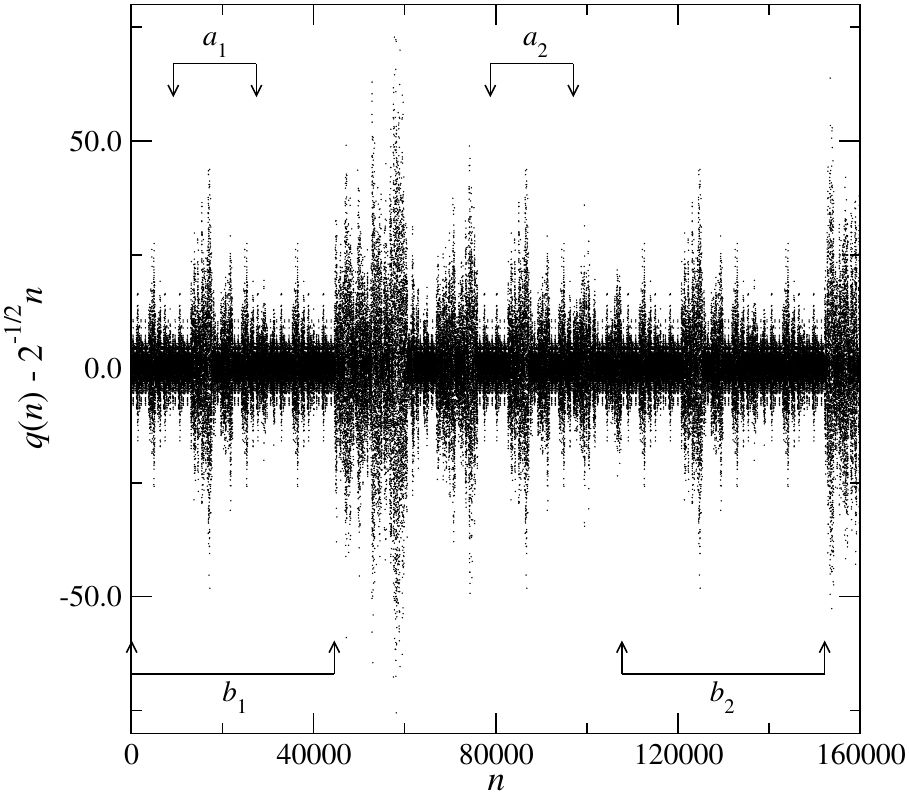}
\caption{Plot of $q(n) - n/\sqrt{2}$ for $n=1,\ldots, 160000$, with
$f(n) = \Big\lfloor\frac{n}{2}\Big\rfloor$. Two pairs of self-similar regions $a_1, a_2$
and $b_1, b_2$ are shown --- see text.}
\label{qhalf}
\end{figure} 
\end{centering}
An interesting case is $q = Q(\lfloor n/2\rfloor)$, shown in figure~\ref{qhalf}.
This shows the de-trended sequence $q(n)-n/\sqrt{2}$ --- see~\eqref{linf} --- for $1\leq n\leq 160000$.
At first sight, no obvious patterns appear in the plot, but in fact
there are several regions of exact self-similarity, even in this limited range of $n$. Specifically,
among others, we observe
\begin{itemize}
\item $q(i+69568)-q(i) = 49192\mbox{ for } i\in\{9235: 27465\}$ (regions $a_1, a_2$); and
\item $q(i+107616)-q(i) = 76096 \mbox{ for } i\in\{91: 44577\}$ (regions $b_1, b_2$).
\end{itemize}

Our final figure gives an intuitive idea of what happens if we make a small
perturbation. It is tempting to describe the behaviour of many of the
sequences $Q(f)$ as `chaotic'. However, since~\eqref{rec} is discrete in time and space, there is no
sense in which the system can be infinitesimally perturbed; perhaps the best we can do is
to compute sequences $q$, $q_1$, where $q_1$ is computed identically to $q$
--- using~\eqref{rec} in the usual way ---
except that we artificially adjust a single term, $q(n_1)$ say, for some small integer $n_1$. The
result of such a computation, with $q = Q(\lfloor n/2\rfloor)$, 
$q_1 = Q(\lfloor n/2\rfloor + \delta(n-16))$, so that $q_1(16) = 1+q(16)$, is shown in
figure~\ref{sens}. This figure is a plot of the difference $q(n) - q_1(n)$ versus $\log_2(n)$,
the scaling bringing out the approximate periodicity visible on a logarithmic scale.
We see regions where $q(n) - q_1(n) = 0$ exactly, interspersed with
regions where this difference is far from zero: the memory of the
perturbation seems to persist indefinitely.
\begin{centering}
\begin{figure}[htbp]
\includegraphics[width=4.0in]{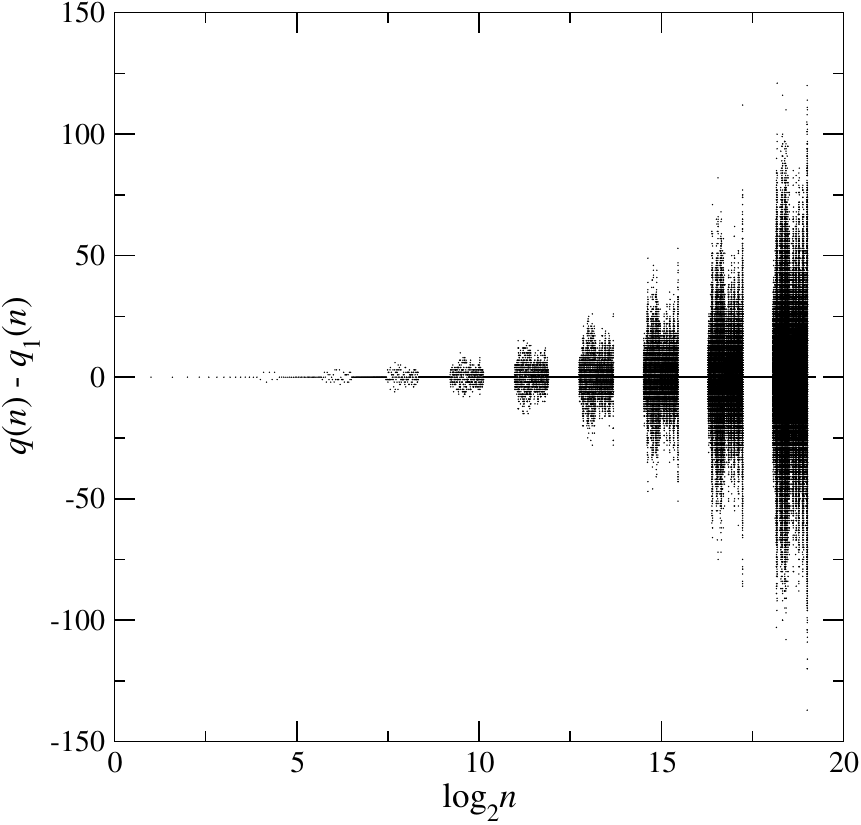}
\caption{Plot of the difference of $q(n) = Q(\lfloor n/2\rfloor)$ and 
$q_1(n) = Q(\lfloor n/2\rfloor + \delta(n-16))$. The idea is that $q_1(n)$
is a slightly perturbed version of $q(n)$. The effect of the perturbation persists, at
least until $n = 2^{19}$.}
\label{sens}
\end{figure} 
\end{centering}

\section{Conclusions and further work}

We have investigated the problem of the conditions on an integer sequence
$f(n)$, $n\in\mathbb N$, with $f(1) = 0$, such that the sequence $q(n)$,
with $q(1) = 1$, computed from $q(n) = q(n-q(n-1)) + f(n)$, exists. We
think of the sequence $q$ as a solution to this difference equation.
We have proved that if $f(n+1) - f(n)\in\{0, 1\}$, $n\geq 1$, then the solution $q$ exists, the
existence of $q$ being exactly equivalent to the inequality $1\leq q(n)\leq n$ holding for
all $n\in\mathbb N$. 

We define $\F$ as the set of semi-infinite sequences with differences 
between successive terms equal to 0 or 1.
In lemma~\ref{nonFq} examples were given of sequences $f\notin\F$ but for 
which $q$ nonetheless exists --- the condition that $f(n+1) - f(n)\in\{0,
1\}$ is sufficient but not necessary --- and this suggests that we define a second
set of semi-infinite integer sequences, $\E$, which is the set of all
sequences $f$ with $f(1) = 0$ such that the corresponding sequence $q$
exists. Clearly, $\F \subsetneq \E$.

It would be fruitful to investigate the structure of $\E$ further. The
question naturally arises as to whether the proof of theorem~\ref{main} 
could be extended to include
more sequences $f$. A better characterisation of $\E$ might be a
useful step on the way to settling the question of the existence of the
Hofstadter sequence, this problem being the original motivation for our work.
In fact, the question would have been settled by theorem~\ref{main}, were
the sequence $c(n) := q_h(n - q_h(n-2))$, with $q_h$ defined by~\ref{hof}, to obey
$c(n+1) - c(n)\in\{0, 1\}$ for $n\geq 3$. It does not --- in fact $c = (1,2,2,2,3,3,3,3,3,4,5,4,5,\ldots)$.

Solutions arising from $f\in\F$ typically appear to display non-trivial
dynamics: in particular, they are generally aperiodic and display no obvious patterns. For
some special sequences $f$, for instance $f(n) = \lfloor\alpha n\rfloor$
with $\alpha\in [0, 1)$, the \textit{average} behaviour appears to be well-defined,
however, and we give a heuristic argument for this. 

Less typical seem to be sequences $f\in\F$ for which the corresponding
solution is also monotonic, and we construct exact solutions in the cases of
which we are aware.

Both from the point of view of the existence of solutions and from the
study of their dynamics, the difference equation~\eqref{rec} appears to be
an interesting subject for further study, even in its own right.

\section*{Acknowledgement}
The Authors would like to acknowledge helfpul discussions with Dr Peter
Gallagher.

\end{document}